\providecommand{\U}[1]{\protect\rule{.1in}{.1in}}
\newcommand*{\E}
{\operatorname{E}}
\newtheorem{thm}{Theorem}
\newtheorem{lem}{Lemma}
\newtheorem{rem}{Remark}
\newtheorem{example}{Example}
\newenvironment{proof}[1][Proof]{\noindent\textbf{#1.} }{\ \rule{0.5em}{0.5em}}
\begin{document}

\title{Projected particle methods for solving McKean-Vlasov stochastic differential equations}
\author{Denis Belomestny$^{1} $ and John Schoenmakers $^{2}$}
\maketitle

\begin{abstract}
We propose a novel projection-based particle method for solving McKean-Vlasov
stochastic differential equations. Our approach is based on a projection-type
estimation of the marginal density of the solution in each time step. The
projection-based particle method leads in many situation to a significant
reduction of numerical complexity compared to the widely used kernel density
estimation algorithms. We derive strong convergence rates and rates of density
estimation. The convergence analysis, particularly in the case of linearly
growing coefficients, turns out to be rather challenging and requires some new
type of averaging technique. This case is exemplified by explicit solutions to
a class of McKean-Vlasov equations with affine drift. The performance of the
proposed algorithm is illustrated by several numerical examples.

\end{abstract}

\footnotetext[1]{Duisburg-Essen University, Thea-Leymann-Str. 9, D-45127
Essen, Germany, \texttt{denis.belomestny@uni-due.de}} \footnotetext[2]%
{Weierstrass Institute for Applied Analysis and Stochastics, Mohrenstr. 39,
10117 Berlin, Germany, \texttt{schoenma@wias-berlin.de}} \emph{Keywords:}
Mckean-Vlasov equations, particle systems, projection estimators, explicit
solutions.\newline\emph{2010 MSC:} 60H10, 60K35.

\section{Introduction}

Nonlinear Markov processes are stochastic processes whose transition functions
may depend not only on the current state of the process but also on the
current distribution of the process. These processes were introduced by
McKean~\cite{mckean1966class} to model plasma dynamics. Later nonlinear Markov
processes were studied by a number of authors; we mention here the books of
Kolokoltsov~\cite{kolokoltsov2010nonlinear} and
Sznitman~\cite{sznitman1991topics}. These processes arise naturally in the
study of the limit behavior of a large number of weakly interacting Markov
processes and have a wide range of applications, including financial
mathematics, population dynamics, and neuroscience (see, e.g.,
\cite{frank2004stochastic} and the references therein).

Let $[0,T]$ be a finite time interval and $(\Omega,\mathcal{F},\mathrm{P})$ be
a complete probability space, where a standard $m$-dimensional Brownian motion
$W$ is defined. We consider a class of McKean-Vlasov SDEs, i.e. stochastic
differential equation (SDE) whose drift and diffusion coefficients may depend
on the current distribution of the process of the form:
\begin{equation}
\left\{
\begin{array}
[c]{ll}%
X_{t} & =\xi+\int_{0}^{t}\int_{\mathbb{R}^{d}}a(X_{s},y)\mu_{s}(dy)ds+\int
_{0}^{t}\int_{\mathbb{R}^{d}}b(X_{s},y)\mu_{s}(dy)dW_{s}\\
\mu_{t} & =\mathrm{Law}(X_{t}),\quad t\in\lbrack0,T],
\end{array}
\right.  \label{eq:sde}%
\end{equation}
where $X_{0}=\xi$ is an $\mathcal{F}_{0}$-measurable random variable in
$\mathbb{R}^{d},$ $a:\,\mathbb{R}^{d}\times\mathbb{R}^{d}\rightarrow
\mathbb{R}^{d}$ and $b:\,\mathbb{R}^{d}\times\mathbb{R}^{d}\rightarrow
\mathbb{R}^{d\times m}.$ {If the functions $a$ and $b$ are smooth with
uniformly bounded derivatives and the random variable $\xi$ has finite moments
of any order, then (see \cite{antonelli2002rate})} there is a unique strong
solution of \eqref{eq:sde} such that for all $p>1,$
\begin{equation}
\mathrm{E}\left[  \sup_{s\leq T}|X_{s}|^{p}\right]  \leq\infty.
\label{eq: xmom}%
\end{equation}
In the sequel we assume that there exists a unique strong solution of
\eqref{eq:sde} such that \eqref{eq: xmom} holds and refer to
\cite{funaki1984certain} for more general sufficient conditions for this.

Assume that $d=1$ and for any $t\geq0,$ the measure $\mu_{t}(du)$ possesses a
bounded density $\mu_{t}(u).$ Then the family of these densities satisfies a
nonlinear Fokker-Planck equation of the form
\begin{align}
\frac{\partial\mu_{t}(x)}{\partial t}  &  =-\frac{\partial}{\partial x}\left(
\left(  \int a(x,y)\mu_{t}(y)\,dy\right)  \,\mu_{t}(x)\right) \nonumber\\
&  +\frac{1}{2}\frac{\partial^{2}}{\partial x^{2}}\left(  \left(  \int
b(x,y)\mu_{t}(y)\,dy\right)  ^{2}\,\mu_{t}(x)\right)  , \label{eq:FP}%
\end{align}
which can be seen as an analogue of a well-known linear Fokker-Planck equation
in the case of linear stochastic differential equations. In
Section~\ref{expls} we will show that if the drift $a$ is affine in $x,$ and
the diffusion coefficient $b$ is independent of $x,$ then the system
(\ref{eq:sde}), and hence (\ref{eq:FP}), has an explicit solution. These
solutions, apart from being interesting in their own right, also provide
explicit cases of an explosive behavior.

The theory of the propagation of chaos developed in \cite{sznitman1991topics},
states that \eqref{eq:sde} is a limiting equation of the system of stochastic
interacting particles (samples) with the following dynamics
\begin{equation}
X_{t}^{i,N}=\xi^{i}+\int_{0}^{t}\int_{\mathbb{R}^{d}}a(X_{s}^{i,N},y)\mu
_{s}^{N}(dy)\,ds+\int_{0}^{t}\int_{\mathbb{R}^{d}}b(X_{s}^{i,N},y)\mu_{s}%
^{N}(dy)\,dW_{s}^{i} \label{eq:par}%
\end{equation}
for $i=1,\ldots,N,$ where $\mu_{t}^{N}=\frac{1}{N}\sum_{i=1}^{N}\delta
_{X_{t}^{i,N}},$ $\xi^{i},$ $i=1,\ldots,N,$ are i.i.d copies of $\xi,$
distributed according the law $\mu_{0},$ and $W^{i},$ $i=1,...,N,$ are
independent copies of $W.$ In fact it can be shown, under sufficient
regularity conditions on the coefficients, that convergence in law for
empirical measures on the path space holds, i.e., $\mu^{N}=\{\mu_{t}^{N}%
:t\in\lbrack0,T]\}\rightarrow\mu$, $N\rightarrow\infty$, see
\cite{meleard1996asymptotic}.

Despite the numerous branches of research on stochastic particle systems,
results on numerical approximations of McKean-Vlasov-SDEs are very sparse. The
authors in \cite{antonelli2002rate} proposed to use the Euler scheme with
time-step $h=T/L,$ that for $l=0,\ldots,L-1$, yields
\begin{equation}
\bar{X}_{t_{l+1}}^{i,N}=\bar{X}_{t_{l}}^{i,N}+\frac{1}{N}\sum_{j=1}^{N}%
a(\bar{X}_{t_{l}}^{i,N},\bar{X}_{t_{l}}^{j,N})\,h+\frac{1}{N}\sum_{j=1}%
^{N}b(\bar{X}_{t_{l}}^{i,N},\bar{X}_{t_{l}}^{j,N})\,\Delta_{l+1}W^{i}
\label{eq:sEuler}%
\end{equation}
for $i=1,\ldots,N,$ $t_{l}=hl,$ and $\Delta_{l+1}W^{i}=W_{h(l+1)}^{i}%
-W_{hl}^{i}$ {(see also \cite{bossy1997stochastic} for more general MVSDEs
and \cite{kloeden2017gauss} for Gauss-quadrature based approach)}.
Implementation of the above algorithm requires usually $N^{2}\times L$
operations in every step of the Euler scheme. By using the algorithm presented
here one can significantly reduce the complexity of the particle simulation
especially if the coefficients of the corresponding McKean-Vlasov SDE are
smooth enough.

The contribution of this paper is twofold. On the one hand, we propose a new
approximation methodology based on a projection-type estimation of the
marginal densities of \eqref{eq:sde}. This methodology often leads to
numerically more efficient algorithms than the kernel-type approximation
algorithms, as they can profit from a global smoothness of coefficients
$a,$$b$ and the corresponding marginal densities. On the other hand, we
present a comprehensive convergence analysis of the proposed algorithms in the
case of possibly linearly growing (in $x$) coefficients $a$ and $b.$ To the
best of our knowledge, no stability analysis of MVSDEs under this linear
growth assumption was done before. In fact such analysis is rather challenging
and requires a special type of averaging technique. And, last but not least,
we study a general class of MVSDEs with affine drift and derive their explicit
solutions, to the best of our knowledge, for the first time.

The paper is organized as follows. In Section~\ref{seq:ppm} we present the
idea of our projected particle method. Section~\ref{seq:conv} is devoted to
the convergence analysis of the projected particle method. In particular, in
Section~\ref{sec:dens} we derive the convergence rates for the corresponding
projected density estimate. Section~\ref{sec:affine} presents a thorough study
of affine MVSDEs. Numerical examples for affine and convolution-type MVSDEs
are presented in Section~\ref{sec:num}. All proofs are collected in
Section~\ref{sec:proofs}.

\section{Projected particle method}

\label{seq:ppm} Let $w:\mathbb{R}^{d}\rightarrow\mathbb{R}_{+}$ be some weight
function with $w>0,$ such that%
\[
a(x,\cdot),b(x,\cdot)\in L_{2}(\mathbb{R}^{d},w)\text{ \ \ for any }%
x\in\mathbb{R}^{d}.
\]
Let further $(\varphi_{k},$ $k=0,1,2,..)$ be a total orthonormal system in
\thinspace$L_{2}(\mathbb{R}^{d},w).$ The corresponding (generalized) Fourier
coefficients of the functions $a(x,\cdot)$ and $b(x,\cdot)$ are given by
\begin{align}
\alpha_{k}(x)  &  :=\int a(x,u)\varphi_{k}(u)\,w(u)\,du\in\mathbb{R}%
^{d},\label{abd}\\
\beta_{k}(x)  &  :=\int b(x,u)\varphi_{k}(u)\,w(u)\,du\in\mathbb{R}^{d\times
m}\nonumber
\end{align}
and the following series representation holds
\[
a(x,\cdot)=\sum_{k=0}^{\infty}\alpha_{k}(x)\varphi_{k}(\cdot)\text{ \ and
\ \ }b(x,\cdot)=\sum_{k=0}^{\infty}\beta_{k}(x)\varphi_{k}(\cdot),\text{
\ \ }x\in\mathbb{R}^{d},
\]
in $L_{2}(\mathbb{R}^{d},w).$ Further it is assumed that each function
$\varphi_{k}$ is bounded so that the functions
\begin{equation}
\gamma_{k}(s):=\mathrm{E}\left[  \varphi_{k}(X_{s})\right]  \label{gam}%
\end{equation}
are well defined. Now let us fix some natural number $K>0$ and consider a
\textit{projected particle approximation} for \eqref{eq:sde}
\begin{equation}
X_{t}^{i,K,N}=\xi^{i}+\int_{0}^{t}\sum_{k=0}^{K}\gamma_{k}^{N}(s)\alpha
_{k}(X_{s}^{i,K,N})\,ds+\int_{0}^{t}\sum_{k=0}^{K}\gamma_{k}^{N}(s)\beta
_{k}(X_{s}^{i,K,N})\,dW_{s}^{i} \label{eq:par_proj}%
\end{equation}
for $i=1,\ldots,N,$ where
\begin{equation}
\gamma_{k}^{N}(s):=\frac{1}{N}\sum_{j=1}^{N}\varphi_{k}(X_{s}^{j,K,N})
\label{heu}%
\end{equation}
can be regarded as an approximation to (\ref{gam}). The projected system
(\ref{eq:par_proj}), with (\ref{heu}), is heuristically motivated by assuming
that for any $s\geq0$ the measure $\mu_{s}(du)$ possesses a density $\mu
_{s}(u)$ that, in view of%
\[
\gamma_{k}(s)=\int\mu_{s}(u)\varphi_{k}(u)du
\]
(cf. (\ref{gam})), formally satisfies%
\begin{equation}
\mu_{s}(u)=\sum_{k=0}^{\infty}\gamma_{k}(s)\varphi_{k}(u)w(u). \label{formu}%
\end{equation}
Then, we (formally) have the expansion
\[
\int a(x,u)\mu_{s}(u)du=\sum_{k=0}^{\infty}\alpha_{k}(x)\gamma_{k}(s),
\]
and this motivates the drift term in (\ref{eq:par_proj}). For the diffusion
term in (\ref{eq:par_proj}) an analogue motivation applies. In order to solve
(\ref{eq:par_proj}) we may consider, for any fixed $L>0,$ an Euler-type
approximation,
\begin{gather}
\bar{X}_{t}^{i,K,N}=\bar{X}_{\eta(t)}^{i,K,N}+\sum_{k=0}^{K}\gamma_{k}%
^{N}\left(  \eta(t)\right)  \,\alpha_{k}\bigl(\bar{X}_{\eta(t)}^{i,K,N}%
\bigr)\,(t-\eta(t))\label{eq: EulerPartProj}\\
+\sum_{k=0}^{K}\gamma_{k}^{N}\left(  \eta(t)\right)  \,\beta_{k}\bigl(\bar
{X}_{\eta(t)}^{i,K,N}\bigr)\,(W_{t}^{i}-W_{\eta(t)})\nonumber
\end{gather}
for $i=1,\ldots,N,$ and $h=T/L,$ where $\eta(t):=lh$ for $t\in\lbrack
lh,(l+1)h),$ $l=1,\ldots,L.$ Note that in order to generate a discretized
particle system $(\bar{X}_{hl}^{i,K,N}),$ $i=1,\ldots,N,$ $l=1,\ldots,L,$ we
need to perform (up to a constant depending on the dimension) $NLK$
operations. This should be compared to $N^{2}L$ operations in
\eqref{eq:sEuler}. Thus if $K$ is much smaller than $N,$ we get a significant
cost reduction. {Of course, this complexity analysis implicitly assumes that
the generalized Fourier coefficients $\alpha_{k}(x)$ and $\beta_{k}(x)$ are
known in closed form or can be cheaply computed. For more details in this
respect see Remark~\ref{disc} below. }

\begin{rem}
\label{disc} Many well known McKean-Vlasov type models used in physics and
engineering are constructed and formulated {via certain Fourier type
expansions of the respective drift and/or diffusion coefficients.} For
example, in the famous Kuramoto-Shinomoto-Sakaguchi model (see e.g.
\cite{frank2004stochastic}, eq. (5.214)) or in the coupled Brownian phase
oscillators (see \cite{kostur2002nonequilibrium}) the mean field potential is
given by its Fourier series, which entails a similar expansion for the
coefficient $a(x,u)$ ($b(x,u)$ is constant).  Let us also mention
a classical work of \cite{drozdov1996expansion}, where a known power series
expansion for the coefficients of a nonlinear Fokker-Planck equation is
assumed. From another point of view, since the basis $\left( \varphi
_{k}\right)  $ with the corresponding weight $w$ can, in principle, be chosen
freely, it is natural to assume  they can be chosen such that the coefficients $\alpha_{k}(x)$ and
$\beta_{k}(x)$ can be computed in closed form. In this respect, let us give
some further examples. If for any $x,$ $a(x,\cdot)$ is a linear combination of
functions of the form:%
\[
q_{1}(u_{1})\cdot\cdot\cdot q_{d}(u_{d})
\]
where each $q_{i}:\mathbb{R\rightarrow R}$ is a polynomial with coefficients
possibly depending on $x$, then $\left(  \varphi_{k}\right)  $ may be taken to
be Hermite functions in $\mathbb{R}^{d},$ i.e.%
\[
\varphi_{\alpha}\left(  u\right)  =H_{\alpha_{1}}(u_{1})\cdot\cdot\cdot
H_{\alpha_{d}}(u_{d})e^{-\left\vert u\right\vert ^{2}/2},\quad\alpha
=(\alpha_{1},\ldots,\alpha_{d}).
\]
The latter situation appears for instance in the popular interaction case with
$a(x,u)=A(x-u),$ where the function $A$ has a given representation
\[
A(z)=\sum_{\alpha}c_{\alpha}z_{1}^{\alpha_{1}}\cdot\ldots\cdot z_{d}%
^{\alpha_{d}},\quad z\in\mathbb{R}^{d},\text{ \ \ }\alpha\in\mathbb{N}_{0}%
^{d}.
\]
As another example, note that the Fourier coefficients of any function of the
form
\[
u\rightarrow u_{1}^{\alpha_{1}}\cdot\ldots\cdot u_{d}^{\alpha_{d}}%
e^{-\frac{\left\vert u-c\right\vert ^{2}}{\sigma}},\quad c\in\mathbb{R}%
^{d},\text{ \ \ }\alpha\in\mathbb{N}_{0}^{d},\text{ }\sigma>0,
\]
with respect to the Hermite basis above can be expressed in closed form. One
so could also consider $a(x,u),$ $b(x,u)$ of the form%
\[
\sum_{r=1}^{R}q_{r}(x)u^{\alpha_{r}}e^{-\left\vert u-c_{r}(x)\right\vert
^{2}/\sigma_{r}(x)},
\]
with free to choose $q_{r}(x)\in\mathbb{R},$ $c_{r}(x)\in\mathbb{R}^{d},$
$\sigma_{r}(x)\in\mathbb{R}_{+},$ $\alpha_{r}\in\mathbb{N}_{0}^{d},$
$R\in\mathbb{N}.$
\end{rem}

\section{Convergence analysis}

\label{seq:conv} In this section we first study the convergence of the
approximated particle system \eqref{eq:par_proj} to the solution of the
original system \eqref{eq:sde}. As a first obvious but important observation,
we note that the distribution of the triple $\left(  X_{s}^{j,K,N},X_{s}%
^{K,N},X_{s}^{j}\right)  $ with $X_{s}^{K,N}:=\left(  {X}_{s}^{1,K,N}%
,\ldots,{X}_{s}^{N,K,N}\right)  $ does not depend on $j,$ and therefore we can
write%
\begin{equation}
\left(  X^{j,K,N},X^{K,N},X^{j}\right)  \overset{\text{distr.}}{=}\left(
X^{\cdot,K,N},X^{K,N},X^{\cdot}\right)  \text{ \ for }j=1,...,N. \label{id}%
\end{equation}
For ease of notation, henceforth we denote with $\left\vert \cdot\right\vert
:=\left\vert \cdot\right\vert _{\dim}$ for a generic dimension $\dim$ the
standard Euclidian norm in $\mathbb{R}^{\dim}.$ Let us make the following assumptions.

\begin{description}
\item[(AF)] The basis functions $(\varphi_{k})$ fulfil
\[
\left\vert \varphi_{k}(z)-\varphi_{k}(z^{\prime})\right\vert \leq
L_{k,\varphi}\left\vert z-z^{\prime}\right\vert ,\quad\left\vert \varphi
_{k}(z)\right\vert \leq D_{k,\varphi},\quad k=0,1,\ldots
\]
for all $z,z^{\prime}\in\mathbb{R}^{d}$ and some constants $L_{k,\varphi
},D_{k,\varphi}>0.$

\item[(AC)] The functions $\alpha_{k}(x),$ $\beta_{k}(x),$ $k=0,1,2,\ldots$
satisfy
\begin{align*}
\left\vert \alpha_{k}(x)\right\vert  &  \leq A_{k,\alpha}(1+\left\vert
x\right\vert )\text{ \ \ with \ \ }\left(  A_{k,\alpha}\right)  _{k=0,1,\ldots
}\in l_{2},\\
\sum_{k=0}^{\infty}D_{k,\varphi}A_{k,\alpha}  &  \leq D_{\varphi}A_{\alpha
},\text{ \ \ and \ \ }\sum_{k=0}^{\infty}L_{k,\varphi}A_{k,\alpha}\leq
L_{\varphi}A_{\alpha},\\
\left\vert \beta_{k}(x)\right\vert  &  \leq A_{k,\beta}(1+\left\vert
x\right\vert )\text{ \ \ with \ \ }\left(  A_{k,\beta}\right)  _{k=0,1,\ldots
}\in l_{2},\\
\text{ \ }\sum_{k=0}^{\infty}D_{k,\varphi}A_{k,\beta}  &  \leq D_{\varphi
}A_{\beta},\text{ \ \ and \ \ }\sum_{k=0}^{\infty}L_{k,\varphi}A_{k,\beta}\leq
L_{\varphi}A_{\beta},
\end{align*}
for some constants $A_{\alpha},$ $A_{\beta},$ $D_{\varphi},$ and $L_{\varphi
}>0,$ and further%
\begin{align*}
\sup_{x,x^{\prime}\in\mathbb{R}^{d},\text{ }x\neq x^{\prime}}\frac{|\alpha
_{k}(x)-\alpha_{k}(x^{\prime})|}{|x-x^{\prime}|}  &  \leq B_{k,\alpha}\text{
\ \ with \ \ }\sum_{k=0}^{\infty}D_{k,\varphi}B_{k,\alpha}\leq D_{\varphi
}B_{\alpha},\\
\sup_{x,x^{\prime}\in\mathbb{R}^{d},\text{ }x\neq x^{\prime}}\frac{|\beta
_{k}(x)-\beta_{k}(x^{\prime})|}{|x-x^{\prime}|}  &  \leq B_{k,\beta} \text{
\ \  with \ \ }\sum_{k=0}^{\infty}D_{k,\varphi}B_{k,\beta}\leq D_{\varphi
}B_{\beta},
\end{align*}
for some $B_{\alpha},$ $B_{\beta}$ $>0.$
\item[(\(\mbox{AM}_p\))] For some \(p>0\) the initial distribution \(\mu_0\) possesses a finite absolute moment of order \(p.\)

\end{description}

In the sequel, for any random variable $\xi\in\mathbb{R}^{\dim}$ on
$(\Omega,\mathcal{F},\mathrm{P})$ we shall use $\Vert\xi\Vert_{p}$ for the
norm of $\left\vert \xi\right\vert $ in $L_{p}(\Omega).$ The following bound
on the strong error can be proved.

\begin{thm}
\label{thmc} For $p\geq2,$ it holds under assumptions (AC), (AF) and (\(AM_{p}\))
that
\begin{align}
\left\Vert \sup_{0\leq r\leq T}\left\vert X_{r}^{\cdot,K,N}-X_{r}^{\cdot
}\right\vert \right\Vert _{p}  &  \lesssim N^{-1/2}+\sum_{k=K+1}^{\infty
}A_{k,\alpha}\,\Vert\gamma_{k}\Vert_{L_{p}[0,T]}\nonumber\\
&  +\sum_{k=K+1}^{\infty}A_{k,\beta}\,\Vert\gamma_{k}\Vert_{L_{p}[0,T]},
\label{eq: main_bound}%
\end{align}
where $\lesssim$ stands for an inequality with some {(hidden)} positive finite
constant depending {only} on $A_{\alpha},A_{\beta},B_{\alpha},B_{\beta
,}D_{\varphi},L_{\varphi},$ $p,$ and $T.$
\end{thm}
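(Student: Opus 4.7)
The plan is to compare $X^{i,K,N}$ and $X^i$ directly, decompose the difference of their integrands into three pieces, and close a Gronwall inequality in the $L_p$ norm. Using the formal identity $\int a(X_s^i,u)\mu_s(du)=\sum_{k=0}^{\infty}\gamma_k(s)\alpha_k(X_s^i)$, the drift difference at time $s$ splits as a \emph{Lipschitz} term $\sum_{k=0}^{K}\gamma_k^N(s)(\alpha_k(X_s^{i,K,N})-\alpha_k(X_s^i))$, a \emph{statistical} term $\sum_{k=0}^{K}(\gamma_k^N(s)-\gamma_k(s))\alpha_k(X_s^i)$, and a \emph{Fourier-tail} term $-\sum_{k>K}\gamma_k(s)\alpha_k(X_s^i)$, with an analogous decomposition for the diffusion. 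A preliminary step is the uniform a priori moment bound $\sup_{K,N}\sup_{s\le T}\|X_s^{\cdot,K,N}\|_p<\infty$, which holds because $|\sum_{k\le K}\gamma_k^N(s)\alpha_k(x)|\le D_\varphi A_\alpha(1+|x|)$ uniformly in $K,N,s,\omega$ by (AC), so that a standard BDG and Gronwall argument for linear-growth SDEs, together with (AM$_p$), applies to the projected particle system itself.

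The Lipschitz piece is bounded by $D_\varphi B_\alpha |X_s^{i,K,N}-X_s^i|$ via $\sum_k D_{k,\varphi}B_{k,\alpha}\le D_\varphi B_\alpha$ from (AC) and the uniform bound $|\gamma_k^N(s)|\le D_{k,\varphi}$, so it feeds directly into the Gronwall step. The Fourier-tail piece is bounded by $\sum_{k>K}|\gamma_k(s)|A_{k,\alpha}(1+|X_s^i|)$; after H\"older and the uniform moment bound on $X_s^i$, its $L_p$ norm integrated in $s$ produces exactly $\sum_{k>K}A_{k,\alpha}\|\gamma_k\|_{L_p[0,T]}$, and the Burkholder--Davis--Gundy inequality applied to the stochastic integral of the diffusion analogue produces the matching $A_{k,\beta}$ tail.

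The delicate piece is the statistical term. Further decompose $\gamma_k^N(s)-\gamma_k(s)=\frac{1}{N}\sum_{j}(\varphi_k(X_s^{j,K,N})-\varphi_k(X_s^j))+\bigl(\frac{1}{N}\sum_{j}\varphi_k(X_s^j)-\gamma_k(s)\bigr)$. For the first summand, use $|\varphi_k(X_s^{j,K,N})-\varphi_k(X_s^j)|\le L_{k,\varphi}|X_s^{j,K,N}-X_s^j|$ and sum in $k$ against $A_{k,\alpha}(1+|X_s^i|)$ using the absolute-summability $\sum_k L_{k,\varphi}A_{k,\alpha}\le L_\varphi A_\alpha$; exchangeability (\ref{id}) then replaces $\frac{1}{N}\sum_j\|X_s^{j,K,N}-X_s^j\|_p$ by $\|X_s^{\cdot,K,N}-X_s^\cdot\|_p$ and the bound feeds Gronwall once more. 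For the second summand, a Marcinkiewicz--Zygmund/Rosenthal inequality applied to the centered i.i.d.\ sum, whose summands are bounded by $2D_{k,\varphi}$, yields $L_p$-norm $\lesssim D_{k,\varphi}/\sqrt{N}$; summing against $A_{k,\alpha}(1+\|X_s^i\|_{2p})$ via $\sum_k D_{k,\varphi}A_{k,\alpha}\le D_\varphi A_\alpha$ produces the $N^{-1/2}$ contribution uniformly in $K$.

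The main obstacle is precisely this statistical step: because the $\alpha_k,\beta_k$ grow linearly in $x$ and the fluctuations $\gamma_k^N-\gamma_k$ must be estimated \emph{simultaneously} for all $k\le K$, one has to order the $L_p$-norm, the $k$-summation, and the BDG/Rosenthal estimates carefully so that neither of the weighted summability conditions of (AC) is violated and the constants stay bounded as $K\to\infty$. This is the averaging technique alluded to in the introduction, exploiting particle exchangeability together with the Fourier weights in (AC). A final Gronwall argument applied to $\|\sup_{r\le t}|X_r^{\cdot,K,N}-X_r^\cdot|\|_p$, after processing the diffusion contributions through BDG in the standard fashion, closes the estimate and yields (\ref{eq: main_bound}).
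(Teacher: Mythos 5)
Your decomposition of the drift (and diffusion) error into a Lipschitz piece, a statistical piece (itself split into a "replace $\varphi_k(X^{j,K,N})$ by $\varphi_k(X^j)$" part and a centered i.i.d.\ fluctuation), and a Fourier-tail piece is essentially equivalent to the paper's use of $\mathfrak{a}_{K,N}(X_s^{i,K,N},X_s^{K,N})-\mathfrak{a}_{K,N}(X_s^{i},X_s)$ followed by $\mathfrak{a}_{K,N}(X_s^{i},X_s)-\mathfrak{a}_s(X_s^i)$, and you correctly identify the a priori moment bound for the projected particle system and Rosenthal's inequality for the centered fluctuations as the other two ingredients. So the skeleton is right.

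The gap is in how you close the Gronwall loop on what you call "statistical part 1." After summing over $k$ with the weights of (AC), this term is bounded by $L_\varphi A_\alpha\,(1+|X_s^i|)\cdot \frac1N\sum_j|\Delta_s^j|$. You then claim that exchangeability lets you replace $\frac1N\sum_j\|\Delta_s^j\|_p$ by $\|\Delta_s^\cdot\|_p$ and feed Gronwall. But the random factor $(1+|X_s^i|)$ is not bounded (the coefficients have linear growth), it is \emph{not} independent of $\{\Delta_s^j\}_j$, and you cannot separate it from the $j$-sum inside the $L_p$-norm: a Minkowski step leaves the product $(1+|X_s^i|)\,|\Delta_s^j|$ inside each summand, and H\"older then promotes $\|\Delta_s^j\|_p$ to $\|\Delta_s^j\|_{2p}$ (or worse), so the Gronwall iteration cannot close at a fixed $L_p$ index. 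Exchangeability alone gives you $\frac1N\sum_j\|\Delta_s^j\|_p = \|\Delta_s^\cdot\|_p$, which is true but is not the quantity you actually have.

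What the paper does here -- and what is the genuine ``averaging technique'' emphasized in the abstract -- is to work with the particle-averaged quantity $H(t)=\mathrm{E}\bigl[\overline{\Delta_t^p}\bigr]$ with $\overline{\Delta_t^p}=\frac1N\sum_i\sup_{s\le t}|\Delta_s^i|^p$. Raising to the $p$-th power and then averaging over $i$ turns the troublesome product into
\[
\Bigl(\tfrac1N\sum_i(1+|X_s^i|)^p\Bigr)\cdot\overline{\Delta_s^p},
\]
i.e.\ a product of two \emph{empirical averages}. One then writes $\frac1N\sum_i(1+|X_s^i|)^p = \mathrm{E}(1+|X_s|)^p + \zeta_{s,N}$ and splits $\mathrm{E}[\zeta_{s,N}\,\overline{\Delta_s^p}]$ at a threshold $\theta$: on $\{\zeta_{s,N}\le\theta\}$ the term feeds Gronwall with a constant $\theta$; on $\{\zeta_{s,N}>\theta\}$ one uses Cauchy--Schwarz together with $\mathrm{E}[\zeta_{s,N}^2\,1_{\{\zeta_{s,N}>\theta\}}]\lesssim N^{-(\eta+1)}$ (a consequence of i.i.d.\ moments of $X_s^j$) and the uniform $2p$-moment bound on $\overline{\Delta_s^p}$ from the a priori estimate. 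This concentration-plus-threshold device is precisely what makes the linear-growth case go through and is absent from your write-up: the last paragraph gestures at ``ordering the $L_p$-norm, the $k$-summation and the BDG/Rosenthal estimates carefully,'' but it does not produce an inequality in a single $L_p$ scale that Gronwall can iterate. Without this step, the argument as written does not close.
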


\begin{rem}
For $1\leq p^{\prime}\leq2,$ we simply have%
\begin{equation}
\left\Vert \sup_{0\leq r\leq T}\left\vert X_{r}^{\cdot,K,N}-X_{r}^{\cdot
}\right\vert \right\Vert _{p^{\prime}}\leq\left\Vert \sup_{0\leq r\leq
T}\left\vert X_{r}^{\cdot,K,N}-X_{r}^{\cdot}\right\vert \right\Vert _{p}
\label{pk2}%
\end{equation}
for any $p\geq2.$
\end{rem}

The next theorem, on the convergence of the Euler approximation
(\ref{eq: EulerPartProj}) to the projected system \eqref{eq:par_proj}, can be
proved along the same lines as the proof of Theorem~\ref{thmc}.

\begin{thm}
For $p\geq2,$ it holds under assumptions (AC), (AF) and (\(AM_{p}\)) that for any
natural $K,N$
\[
\left\Vert \sup_{0\leq r\leq T}\left\vert \bar{X}_{r}^{\cdot,K,N}-X_{r}%
^{\cdot,K,N}\right\vert \right\Vert _{p}\lesssim\sqrt{h},
\]
where $\lesssim$ stands for an inequality with some {(hidden)} positive finite
constant depending only on $A_{\alpha},A_{\beta},B_{\alpha},B_{\beta
},D_{\varphi},L_{\varphi},$ $p$ and $T.$
\end{thm}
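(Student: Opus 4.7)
The plan is to adapt the coupling strategy used in the proof of Theorem~\ref{thmc}, driving both the Euler scheme $\bar X^{\cdot,K,N}$ and the projected particle system $X^{\cdot,K,N}$ by the same initial conditions $\xi^{i}$ and the same Brownian motions $W^{i}$. By the same symmetry argument that underlies (\ref{id}), both systems are exchangeable in the particle index, so that $e(t):=\Vert\sup_{r\le t}|\bar X_{r}^{\cdot,K,N}-X_{r}^{\cdot,K,N}|\Vert_{p}$ is well-defined and independent of the particle label.

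As preliminary ingredients I would first establish $\sup_{0\le t\le T}\Vert\bar X_{t}^{\cdot,K,N}\Vert_{p}\lesssim 1$ uniformly in $K,N,L$. This follows by discrete Gr\"onwall from the linear growth estimate
\[
\Bigl|\sum_{k=0}^{K}\bar\gamma_{k}^{N}(\eta(s))\,\alpha_{k}(x)\Bigr|\le D_{\varphi}A_{\alpha}(1+|x|),
\]
which uses $|\bar\gamma_{k}^{N}|\le D_{k,\varphi}$ (from (AF)) together with $\sum_{k}D_{k,\varphi}A_{k,\alpha}\le D_{\varphi}A_{\alpha}$ from (AC), and its counterpart for the diffusion. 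A standard BDG estimate applied to a single Euler step then yields the local consistency bound $\Vert\bar X_{t}^{i,K,N}-\bar X_{\eta(t)}^{i,K,N}\Vert_{p}\lesssim\sqrt{h}$.

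Writing
\[
\bar X_{t}^{i,K,N}-X_{t}^{i,K,N}=\int_{0}^{t}\Delta_{s}^{\alpha}\,ds+\int_{0}^{t}\Delta_{s}^{\beta}\,dW_{s}^{i}
\]
with
\[
\Delta_{s}^{\alpha}=\sum_{k=0}^{K}\bigl[\bar\gamma_{k}^{N}(\eta(s))\,\alpha_{k}(\bar X_{\eta(s)}^{i,K,N})-\gamma_{k}^{N}(s)\,\alpha_{k}(X_{s}^{i,K,N})\bigr]
\]
and $\Delta_{s}^{\beta}$ analogously, I would telescope each $k$-summand as
\[
\bar\gamma_{k}^{N}(\eta(s))\bigl[\alpha_{k}(\bar X_{\eta(s)}^{i,K,N})-\alpha_{k}(X_{s}^{i,K,N})\bigr]+\alpha_{k}(X_{s}^{i,K,N})\bigl[\bar\gamma_{k}^{N}(\eta(s))-\gamma_{k}^{N}(s)\bigr],
\]
further splitting $|\bar X_{\eta(s)}^{i,K,N}-X_{s}^{i,K,N}|$ through $\bar X_{s}^{i,K,N}$ and $\bar\gamma_{k}^{N}(\eta(s))-\gamma_{k}^{N}(s)$ through $\bar\gamma_{k}^{N}(s)$. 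The time-freezing contributions are handled by the one-step bound of order $\sqrt{h}$ combined with the Lipschitz constants $B_{k,\alpha}$ and $L_{k,\varphi}$, while the particle-mean discrepancy $\bar\gamma_{k}^{N}(s)-\gamma_{k}^{N}(s)=N^{-1}\sum_{j}[\varphi_{k}(\bar X_{s}^{j,K,N})-\varphi_{k}(X_{s}^{j,K,N})]$ is controlled by $L_{k,\varphi}e(s)$ in $L_{p}$ by exchangeability. Summing over $k$ using the $l_{1}$-type bounds in (AC) (for instance $\sum_{k}D_{k,\varphi}B_{k,\alpha}\le D_{\varphi}B_{\alpha}$ and $\sum_{k}L_{k,\varphi}A_{k,\alpha}\le L_{\varphi}A_{\alpha}$), applying BDG to the stochastic integral, and taking $\sup_{r\le t}\Vert\cdot\Vert_{p}$, one arrives at $e(t)\le C\sqrt{h}+C\int_{0}^{t}e(s)\,ds$ with $C$ independent of $K,N,L$, from which Gr\"onwall's lemma yields $e(T)\lesssim\sqrt{h}$.

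The main obstacle is the bookkeeping: each telescoping step must be immediately closed by a Lipschitz bound that converts an increment of $\varphi_{k}$, $\alpha_{k}$ or $\beta_{k}$ into a product of an $L_{k,\varphi},B_{k,\alpha}$ or $B_{k,\beta}$-type constant and a coupling-increment, so that the subsequent summation over $k=0,\ldots,K$ is absorbed by the $l_{1}$-type conditions in (AC) and the resulting $C$ does not depend on $K$. Beyond this accounting, the proof proceeds exactly as that of Theorem~\ref{thmc}.
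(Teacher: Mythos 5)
Your coupling setup, local-consistency bound, moment bounds for the Euler scheme, and the telescoping of the drift/diffusion difference are all the right ingredients, and the paper itself indicates that this theorem "can be proved along the same lines" as Theorem~\ref{thmc}. But you misidentify the main difficulty as $K$-independent bookkeeping; that part is routine. The genuinely delicate point, which your proposal glosses over, is the following. After telescoping, the $k$-summand contains the term $\alpha_{k}\bigl(X_{s}^{i,K,N}\bigr)\bigl[\bar\gamma_{k}^{N}(\eta(s))-\gamma_{k}^{N}(s)\bigr]$. Under (AC) the factor $\alpha_{k}\bigl(X_{s}^{i,K,N}\bigr)$ has only linear growth, so its bound $A_{k,\alpha}\bigl(1+|X_{s}^{i,K,N}|\bigr)$ is an unbounded random variable, while your claim that the particle-mean discrepancy "is controlled by $L_{k,\varphi}e(s)$ in $L_{p}$ by exchangeability" only gives the $L_{p}$ norm of the discrepancy \emph{alone}. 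The $L_{p}$ norm of the product is not the product of $L_{p}$ norms, so one cannot conclude $\bigl\Vert\alpha_{k}(X_{s}^{i,K,N})[\bar\gamma_{k}^{N}(s)-\gamma_{k}^{N}(s)]\bigr\Vert_{p}\lesssim A_{k,\alpha}L_{k,\varphi}\,e(s)$. As a consequence the asserted inequality $e(t)\le C\sqrt{h}+C\int_{0}^{t}e(s)\,ds$ does not follow from the steps you list.

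This is exactly the obstacle the paper highlights in its Discussion and resolves, in the proof of Theorem~\ref{thmc}, by the averaging-and-truncation device in (\ref{hu1})--(\ref{hu5}): one works with the particle-averaged quantity $\overline{\Delta_{s}^{p}}=\frac{1}{N}\sum_{i}\sup_{r\le s}|\Delta_{r}^{i}|^{p}$, writes the product as $\frac{1}{N}\sum_{i}(1+|X_{s}^{i}|)^{p}\cdot\overline{\Delta_{s}^{p}}$, centers the empirical moment via $\zeta_{s,N}$, truncates at a fixed level $\theta$, and controls the tail with Cauchy--Schwarz together with the $2p$-th moment bounds of Theorem~\ref{thm:moments}. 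Your proposal needs the analogous step (with $X^{\cdot,K,N}$ in place of $X^{\cdot}$ and $\bar X^{\cdot,K,N}$ in place of $X^{\cdot,K,N}$) before the Gr\"onwall loop closes; in particular you must also verify that the $2p$-th moment bound for $\bar X^{\cdot,K,N}$, uniform in $K,N,L$, holds, which your discrete Gr\"onwall argument gives but should be stated. Filling this in, the rest of your plan is sound.
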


\paragraph{Discussion}

The bound \eqref{eq: main_bound} is proved under rather general assumptions on
the coefficients $a(x,y)$ and $b(x,y).$ In particular, we allow for linear
growth of these coefficients in $x$. This makes the proof of the bound in
Theorem~\ref{thmc} rather challenging, since we need to avoid an explosion. In
order to overcome this problem, we employ a kind of averaging technique which,
being combined with the symmetry of the particle distribution and the
existence of moments (see Section~\ref{sec:moments}), gives the desired bound.
Note that for this we have to assume existence and uniqueness of a strong
solution of the original MVSDE \eqref{eq:sde}. Funaki \cite{funaki1984certain}
proved existence and uniqueness under (essentially) global Lipschitz
condition. However, one should be able to extend his results by exploiting a
kind of one sided Lipschitz condition like in \cite{higham2002strong} or
\cite{hutzenthaler2012strong}.

The bound \eqref{eq: main_bound} consists of stochastic and approximation
errors. While the first error is of order $1/\sqrt{N},$ the second one depends
on $K$ and the properties of the coefficients $a(x,y)$ and $b(x,y).$ If these
coefficients are smooth in the sense that their generalized Fourier
coefficients $(\alpha_{k})$ and $(\beta_{k})$ decay fast, then the
approximation error can be made small even for medium values of $K.$

\begin{example}
The (normalized) Hermite polynomial of order $j$ is given, for $j\geq0$, by
\[
\overline{H}_{j}(x)=c_{j}(-1)^{j}e^{x^{2}}\frac{d^{j}}{dx^{j}}(e^{-x^{2}%
}),\quad c_{j}=\left(  2^{j}j!\sqrt{\pi}\right)  ^{-1/2}.
\]
These polynomials satisfy: $\int_{{\mathbb{R}}}\overline{H}_{j}(x)\overline
{H}_{\ell}(x)e^{-x^{2}}dx=\delta_{j,\ell}$ and, as a consequence,
\begin{equation}
\varphi_{k}(u)=\overline{H}_{k}(u)e^{-u^{2}/2},\quad k=1,2,\ldots,
\label{eq:herm_func}%
\end{equation}
is a total orthonormal system in $L_{2}\left(  \mathbb{R}^{d}\right)  $ (i.e.
here $w=1$). Moreover, $(\varphi_{k})_{k\geq0}$ fulfil the assumption (AF)
with $D_{k,\varphi}$ and $L_{k,\varphi}$ being uniformly bounded in $k,$ {see,
e.g. \cite{szego1975orthogonal}, p. 242.} Now let us suppose that
$a(x,\cdot),b(x,\cdot)\in L_{2}(\mathbb{R}^{d})$ for any $x\in\mathbb{R},$ and
discuss the assumptions (AC).
\end{example}

\begin{lem}
Suppose that for any $x\in\mathbb{R},$ the functions (in $u$)
\[
\widetilde a(x,u):=\frac{a(x,u)}{\sqrt{1+x^{2}}}, \quad\widetilde
b(x,u):=\frac{b(x,u)}{\sqrt{1+x^{2}}}
\]
admit derivatives in $u$ up to order $s>2$ such that the functions (in $u$)
\[
u^{\ell}\partial_{u}^{m}\widetilde a(x,u),\quad u^{\ell}\partial_{u}%
^{m}\widetilde b(x,u), \quad0\leq l+m\leq s
\]
are bounded and belong to $L_{1}({\mathbb{R}})$ (uniformly in $x$) together
with their first derivatives in $x.$ Then the assumption (AC) is satisfied
and
\begin{equation}
\left\Vert \sup_{0\leq r\leq T}\left\vert X_{r}^{\cdot,K,N}-X_{r}^{\cdot
}\right\vert \right\Vert _{p}\lesssim K^{1-s/2}+N^{-1/2}, \label{err}%
\end{equation}
as $K,N\rightarrow\infty.$
\end{lem}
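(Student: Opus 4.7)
The plan is to verify assumption (AC) by bounding the Hermite coefficients of $a(x,\cdot)$ and $b(x,\cdot)$, and then to invoke Theorem~\ref{thmc}. First I would factor out the growth in $x$: write $a(x,u)=\sqrt{1+x^{2}}\,\widetilde{a}(x,u)$, so that $\alpha_{k}(x)=\sqrt{1+x^{2}}\,\widetilde{\alpha}_{k}(x)$ with $\widetilde{\alpha}_{k}(x)=\int\widetilde{a}(x,u)\varphi_{k}(u)\,du$. Since $\sqrt{1+x^{2}}\leq 1+|x|$, it is enough to prove $|\widetilde{\alpha}_{k}(x)|\lesssim k^{-s/2}$ uniformly in $x$; this yields $A_{k,\alpha}\lesssim k^{-s/2}$. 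Together with the uniform-in-$k$ bounds on $D_{k,\varphi}$ and $L_{k,\varphi}$ cited in the example, and the summability $\sum_{k}k^{-s/2}<\infty$ for $s>2$, this delivers all four series inequalities in (AC) involving $\alpha_{k}$; the same argument applies to $b$ and $\beta_{k}$.

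The key decay estimate exploits that $\varphi_{k}$ is the $k$-th eigenfunction of the Hermite operator $H:=-\partial_{u}^{2}+u^{2}$ with $H\varphi_{k}=(2k+1)\varphi_{k}$. I would factor $H$ through the ladder operators $A=(u+\partial_{u})/\sqrt{2}$ and $A^{\ast}=(u-\partial_{u})/\sqrt{2}$, which act by $A\varphi_{k}=\sqrt{k}\,\varphi_{k-1}$ and $A^{\ast}\varphi_{k}=\sqrt{k+1}\,\varphi_{k+1}$; each integration by parts shifts the index and extracts a factor of order $k^{-1/2}$. After $s$ applications one obtains
\[
\widetilde{\alpha}_{k}(x)\;=\;O\bigl(k^{-s/2}\bigr)\,\int\bigl|P_{s}(u,\partial_{u})\widetilde{a}(x,u)\bigr|\,du,
\]
where $P_{s}$ is a polynomial of total degree $s$ in $u$ and $\partial_{u}$. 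The hypothesis that each $u^{\ell}\partial_{u}^{m}\widetilde{a}(x,\cdot)$ with $\ell+m\leq s$ is bounded in $L_{1}(\mathbb{R})$ uniformly in $x$ is precisely what makes the right-hand side bounded uniformly in $x$, giving $|\widetilde{\alpha}_{k}(x)|\lesssim k^{-s/2}$.

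For the Lipschitz constants $B_{k,\alpha}$, I would differentiate under the integral, $\partial_{x}\alpha_{k}(x)=\int\partial_{x}a(x,u)\varphi_{k}(u)\,du$, and apply the same Hermite integration by parts to $\partial_{x}\widetilde{a}$; by hypothesis its $u$-derivatives up to order $s$ are again $L_{1}$ uniformly in $x$, so $B_{k,\alpha}\lesssim k^{-s/2}$ (and analogously $B_{k,\beta}\lesssim k^{-s/2}$). Since $|\gamma_{k}(s)|=|\mathrm{E}[\varphi_{k}(X_{s})]|\leq D_{k,\varphi}\lesssim 1$, we have $\|\gamma_{k}\|_{L_{p}[0,T]}\lesssim 1$ uniformly in $k$. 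Substituting into Theorem~\ref{thmc},
\[
\sum_{k=K+1}^{\infty}A_{k,\alpha}\,\|\gamma_{k}\|_{L_{p}[0,T]}+\sum_{k=K+1}^{\infty}A_{k,\beta}\,\|\gamma_{k}\|_{L_{p}[0,T]}\;\lesssim\;\sum_{k=K+1}^{\infty}k^{-s/2}\;\lesssim\;K^{1-s/2},
\]
which, combined with the $N^{-1/2}$ contribution, yields \eqref{err}.

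The main obstacle is the uniform-in-$x$ Hermite-coefficient decay. The book-keeping required to move the ladder operators onto $\varphi_{k}$ while tracking the polynomial-weighted mixed derivatives that they produce on $\widetilde{a}$ is the only step with real content; once the factors $u^{\ell}\partial_{u}^{m}\widetilde{a}$ with $\ell+m\leq s$ appear as the integrands, the $L_{1}$-hypothesis directly furnishes the uniform bound, and the rest is an application of Theorem~\ref{thmc}.
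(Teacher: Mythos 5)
Your proof is correct and follows essentially the same route as the paper's: you factor out $\sqrt{1+x^{2}}$, integrate by parts against the Hermite basis to push derivatives onto $\widetilde{a}$, and use the uniform $L_{1}$-bounds on $u^{\ell}\partial_{u}^{m}\widetilde{a}$ to get $\widetilde{\alpha}_{k}(x)=O(k^{-s/2})$ uniformly in $x$, then invoke Theorem~\ref{thmc}. The only difference is presentational: the paper performs the integration by parts explicitly twice, using the recurrence $\overline{H}_{k+1}^{\prime}=(2k+2)^{1/2}\overline{H}_{k}$, and then says ``integrate by parts further''; you phrase the same computation in terms of the ladder operators $A=(u+\partial_{u})/\sqrt{2}$, $A^{\ast}=(u-\partial_{u})/\sqrt{2}$, which makes the $k^{-s/2}$ bookkeeping cleaner and you also spell out the final steps (boundedness of $\gamma_{k}$ and $\sum_{k>K}k^{-s/2}\lesssim K^{1-s/2}$) that the paper leaves implicit.
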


\begin{proof}
We have
\begin{align*}
\alpha_{k}(x)  &  =\sqrt{1+x^{2}}\int\widetilde{a}(x,u)\overline{H}%
_{k}(u)\,e^{-u^{2}/2}\,du =\sqrt{1+x^{2}}\,\widetilde{\alpha}_{k}(x) ,\\
\beta_{k}(x)  &  =\sqrt{1+x^{2}}\int\widetilde{b}(x,u)\overline{H}%
_{k}(u)\,e^{-u^{2}/2}\,du= \sqrt{1+x^{2}}\,\widetilde{\beta}_{k}(x).
\end{align*}
The identity
\[
(2k+2)^{1/2}\overline{H}_{k}(x)=\overline{H}^{\prime}_{k+1}(z)
\]
and the integration-by-parts formula imply
\begin{align*}
\widetilde{\alpha}_{k}(x)  &  =\left.  \frac{\widetilde{a}(x,u)e^{-u^{2}%
/2}\overline{H}_{k+1}(u)}{(2k+2)^{1/2}}\right|  _{-\infty}^{\infty}\\
&  -\frac{1}{(2k+2)^{1/2}}\int_{-\infty}^{\infty}\left[  \frac{\partial
\widetilde{a}(x,u)}{\partial u}-u\widetilde{a}(x,u)\right]  \overline{H}%
_{k+1}(u)e^{-u^{2}/2}\,du.
\end{align*}
Note that $|\overline{H}_{k}(u)|\,e^{-u^{2}/2}\leq1$ uniformly in $u$ and $k$
({see, e.g. \cite{szego1975orthogonal}, p. 242.}) Hence if $\widetilde
{a}(x,u)$ is bounded and
\[
\int\left|  \frac{\partial\widetilde{a}(x,u)}{\partial u}-u\widetilde
{a}(x,u)\right|  \,du
\]
is bounded uniformly in $x,$ then $\widetilde{\alpha}_{k}(x)=O\left(
k^{-1/2}\right)  $ uniformly in $x.$ The second integration-by-parts shows
that $\widetilde{\alpha}_{k}(x)=O\left(  k^{-1}\right)  ,$ provided the
functions
\[
u^{2}\cdot\widetilde{a}(x,u),\frac{\partial^{2}\widetilde{a}(x,u)}{\partial
u^{2}},u\cdot\frac{\partial\widetilde{a}(x,u)}{\partial u}
\]
are integrable on $\mathbb{R}$ with their $L_{1}(\mathbb{R})$ norms uniformly
bounded in $x.$ Integrating by parts further, we derive the desired statement.
\end{proof}

\begin{rem}
As a rule, one chooses $N$ and $K$ such that the errors in (\ref{err}) are
balanced, that is $N^{1/(s-2)}\sim K,$ yielding a proportional reduction of
computational cost of order $N\cdot K/N^{2}$ $\sim N^{-(s-3)/(s-2)}.$
Alternatively we can compare the complexity, that is the computational cost
for achieving a prescribed accuracy $\varepsilon,$ for of the Euler schemes
(\ref{eq:sEuler}) and (\ref{eq: EulerPartProj}). It is not difficult to see that, after incorporating the
path-wise time discretization error, the standard Euler scheme (\ref{eq:sEuler}) has complexity of
order $\varepsilon^{-6},$ while the projected one (\ref{eq: EulerPartProj}) has complexity of
order $\varepsilon^{-(4s-6)/(s-2)}$ which is significantly smaller when $s>3.$
Moreover, in \cite{antonelli2002rate} conditions are formulated, guaranteeing
that all measures $\mu_{t},$ $t\geq0,$ possess infinitely smooth exponentially
decaying densities. In this case we can additionally profit from the decay of
the generalized Fourier coefficients $(\gamma_{k})$ such that the convergence
rates in \eqref{eq: main_bound} give rise to a proportional reduction of
computational cost approaching $N^{-1},$ corresponding to a complexity of
order $\varepsilon^{-4}$ (modulo some logarithmic term) for the method (\ref{eq: EulerPartProj}).
\end{rem}

\subsection{Density estimation}

\label{sec:dens} Let us now discuss the estimation of the densities $\mu_{t},$
$t\geq0.$ Let us assume that the formal relationship (\ref{formu}) holds in
the sense that%
\[
\frac{\mu_{s}}{w}=\sum_{k=0}^{\infty}\gamma_{k}(s)\varphi_{k}%
\]
in $L_{2}\left(  \mathbb{R}^{d},w\right)  ,$ i.e. $\mu_{s}^{2}/w\in
L_{1}\left(  \mathbb{R}^{d}\right)  .$ Fix some $t>0,$ $K_{\text{test}}%
\in\mathbb{N}$ and set
\[
\widehat{\mu}_{t}^{K_{\text{test}},K,N}(x):=\sum_{k=1}^{K_{\text{test}}}%
\gamma_{k}^{N}(t)\varphi_{k}(x)w(x)
\]
with $\gamma_{k}^{N}(t):=\frac{1}{N}\sum_{i=1}^{N}\varphi_{k}(X_{t}^{i,K,N}),$
$k=1,\ldots,K_{\text{test}}.$ We obviously have
\begin{align*}
\mathrm{E}\int|\widehat{\mu}_{t}^{K_{\text{test}},K,N}(x)-\mu_{t}%
(x)|^{2}w^{-1}(x)\,dx  &  =\sum_{k=1}^{K_{\text{test}}}\mathrm{E}\left[
|\gamma_{k}^{N}(t)-\gamma_{k}(t)|^{2}\right] \\
&  +\sum_{k=K_{\text{test}}+1}^{\infty}|\gamma_{k}(t)|^{2},
\end{align*}
where (due to (AF))
\begin{align*}
\mathrm{E}\left[  |\gamma_{k}^{N}(t)-\gamma_{k}(t)|^{2}\right]   &
=\mathrm{E}\left[  \left\vert \frac{1}{N}\sum_{j=1}^{N}\varphi_{k}%
(X_{t}^{j,K,N})-\mathrm{E}\left[  \varphi_{k}(X_{t}^{\cdot})\right]
\right\vert ^{2}\right] \\
&  \leq2\mathrm{E}\left[  \left\vert \frac{1}{N}\sum_{j=1}^{N}\left(
\varphi_{k}(X_{t}^{j,K,N})-\varphi_{k}(X_{t}^{j})\right)  \right\vert
^{2}\right] \\
&  +2\mathrm{E}\left[  \left\vert \frac{1}{N}\sum_{j=1}^{N}\left(  \varphi
_{k}(X_{t}^{j})-\mathrm{E}\left[  \varphi_{k}(X_{t}^{j})\right]  \right)
\right\vert ^{2}\right] \\
&  \leq2L_{k,\varphi}^{2}\mathrm{E}\left[  \left\vert X_{t}^{\cdot,K,N}%
-X_{t}^{\cdot}\right\vert ^{2}\right]  +\frac{2}{N}\mathrm{Var}\left[
\varphi_{k}(X_{t})\right]  ,
\end{align*}
since the $X^{j}$ are independent. Theorem~\ref{thmc} now implies
\begin{multline}
\label{eq:dens_error}\left(  \mathrm{E}\int|\widehat{\mu}_{t}^{K_{\text{test}%
},K,N}(x)-\mu_{t}(x)|^{2}\,w^{-1}(x)\,dx\right)  ^{1/2}\lesssim\left(
\frac{1}{N}\sum_{k=1}^{K_{\text{test}}}(L_{k,\varphi}^{2}+D_{k,\varphi}%
^{2})\right)  ^{1/2}\\
+\left(  \sum_{k=1}^{K_{\text{test}}}L_{k,\varphi}^{2}\right)  ^{1/2}\left(
\sum_{k=K+1}^{\infty}(A_{k,\alpha}+A_{k,\beta})\,\Vert\gamma_{k}\Vert
_{L_{p}[0,T]}\right) \\
+\left(  \sum_{k=K_{\text{test}}+1}^{\infty}|\gamma_{k}(t)|^{2}\right)
^{1/2}.
\end{multline}
The last term always converges to zero as $K_{\text{test}}\rightarrow\infty,$
since $\mu_{t}/w\in L_{2}(\mathbb{R}^{d},w).$ The first term can be controlled
for any fixed $K_{\text{test}}$ by taking $N$ large enough. Finally, for any
fixed $K_{\text{test}},$ the second term can be made small by taking $K$ large
enough and using the condition (AC).

\section{Specific models}

\label{sec:affine}

\subsection{Generalized Shimizu-Yamada Models}

\label{expls}

Inspired by the work of Shimizu and Yamada \cite{shimizu1972phenomenological},
\cite{zhang1997numerical} and \cite{lo2012simple}, we consider one-dimensional
McKean-Vlasov equations of the form \eqref{eq:sde} with
\[
a(x,u):=a^{0}(u)+a^{1}(u)x,\quad b(x,u):=b(u).
\]
This class of models allows for a linear dependence of drift on the
distribution of $X$ through $\mathrm{E}\left[  a^{0}(X_{t})\right]  $ and
$\mathrm{E}\left[  a^{1}(X_{t})\right]  .$ Let us define for polynomially
bounded and measurable functions $a^{j}$ and $b$ the generalized Gauss
transforms,
\begin{align*}
H_{a^{j}}(p,q)  &  :=\frac{1}{\sqrt{2\pi q}}\int a^{j}(u)e^{-\frac{(p-u)^{2}%
}{2q}}du,\quad j=0,1,\\
H_{b}(p,q)  &  :=\frac{1}{\sqrt{2\pi q}}\int b(u)e^{-\frac{(p-u)^{2}}{2q}%
}du,\text{ \ \ }p\in\mathbb{R},\text{ \ }q>0.
\end{align*}
Let moreover $a^{j}$ and $b$ be such that the partial derivatives,%
\begin{gather}
\partial_{p}H_{a^{j}}(p,q),\,\partial_{q}H_{a^{j}}(p,q)\text{ \ \ }%
j=0,1,\text{ \ \ and \ \ }\partial_{p}H_{b}(p,q),\,\partial_{q}H_{b}%
(p,q),\text{\ }\nonumber\\
\text{extend continuously to any }(p,q)\in\mathbb{R\times R}_{\geq0}.\text{\ }
\label{rder0}%
\end{gather}
It is not difficult to see that (\ref{rder0}) holds if $a^{j}$ and $b$ are
entire functions for which the coefficients of their power series around $u=0$
decay fast enough to zero (which is trivially satisfied for any polynomial). A
complete characterization of $a^{j}$ and $b$ such that (\ref{rder0}) holds, is
connected with analytic vectors for semigroups related to the heat kernel and
considered beyond the scope of this paper however.

\begin{thm}
\label{thm:affine_expl} Let $a^{j}$ and $b$ satisfy (\ref{rder0}). (i) Then
the following system of ODEs
\begin{align}
G_{t}^{\prime}  &  =H_{b}^{2}\left(  A_{t},G_{t}\right)  +2H_{a^{1}}\left(
A_{t},G_{t}\right)  G_{t}\label{ODE}\\
A_{t}^{\prime}  &  =H_{a^{0}}\left(  A_{t},G_{t}\right)  +H_{a^{1}}\left(
A_{t},G_{t}\right)  A_{t},\text{ \ \ }(A_{0},G_{0})=\left(  x_{0},0\right)
,\nonumber
\end{align}
has for $0\leq t<t_{\infty}\leq\infty,$ i.e. up to some possibly finite
exploding time $t_{\infty},$ a unique solution $(A_{t},G_{t})\in
\mathbb{R\times R}_{\geq0}.$ (ii) The McKean-Vlasov SDE
\begin{equation}
dX_{t}=(\mathrm{E}\left[  a^{0}(X_{t})\right]  +X_{t}\,\mathrm{E}\left[
a^{1}(X_{t})\right]  )\,dt+\mathrm{E}\left[  b(X_{t})\right]  dW_{t},\quad
X_{0}=x_{0} \label{MVl}%
\end{equation}
is then equivalent to
\begin{equation}
dX_{t}=\left(  H_{a^{0}}\left(  A_{t},G_{t}\right)  +H_{a^{1}}\left(
A_{t},G_{t}\right)  X_{t}\right)  dt+H_{b}\left(  A_{t},G_{t}\right)
dW_{t},\text{ \ \ }X_{0}=x_{0}, \label{exp0}%
\end{equation}
and has explicit solution,
\begin{align}
X_{t}  &  =x_{0}e^{\int_{0}^{t}H_{a^{1}}\left(  A_{s},G_{s}\right)  ds}%
+\int_{0}^{t}H_{a^{0}}\left(  A_{s},G_{s}\right)  e^{\int_{s}^{t}H_{a^{1}%
}\left(  A_{r},G_{r}\right)  dr}ds\label{exp}\\
&  +\int_{0}^{t}H_{b}\left(  A_{s},G_{s}\right)  e^{\int_{s}^{t}H_{a^{1}%
}\left(  A_{r},G_{r}\right)  dr}dW_{s},\text{ \ \ }0\leq t<t_{\infty}%
\leq\infty.\nonumber
\end{align}
Note: the Wiener integral in (\ref{exp}) can be interpreted by an ordinary
integral after partial integration, due to the smoothness of the
(deterministic) integrand.
\end{thm}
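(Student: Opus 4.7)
The plan is to organize the proof around the observation that (\ref{exp0}) is a linear-in-$x$ SDE with deterministic, time-varying coefficients, so its solution is Gaussian, and the system (\ref{ODE}) is precisely the evolution equation for its mean and variance. For part (i), the right-hand side
$$F(A,G) := \bigl(H_{a^{0}}(A,G) + H_{a^{1}}(A,G)\,A,\; H_{b}^{2}(A,G) + 2H_{a^{1}}(A,G)\,G\bigr)$$
is, by hypothesis (\ref{rder0}), continuously differentiable on $\mathbb{R} \times \mathbb{R}_{\geq 0}$ and thus locally Lipschitz there. A standard Picard--Lindel\"of argument from the initial point $(x_0,0)$ then yields a unique local solution that extends to a maximal interval $[0, t_\infty)$. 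Non-negativity of $G_t$ is preserved since whenever $G_t = 0$ one has $G_t' = H_{b}^{2}(A_t, 0) \geq 0$, so the trajectory does not leave the admissible half-plane.

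For parts (ii) and (iii), given the ODE solution $(A_t, G_t)$ I would consider the linear SDE (\ref{exp0}) with the now-deterministic coefficients $t \mapsto H_{a^{j}}(A_t, G_t)$ and $t \mapsto H_{b}(A_t, G_t)$. The formula (\ref{exp}) is then just the classical variation-of-constants representation of its unique strong solution; since the diffusion integrand is deterministic and continuous, $X_t$ is Gaussian for each $t$. Setting $m_t := \mathrm{E}[X_t]$ and $v_t := \mathrm{Var}(X_t)$, a direct moment computation (or a short application of It\^o) gives
\begin{align*}
m_t' &= H_{a^{0}}(A_t, G_t) + H_{a^{1}}(A_t, G_t)\, m_t, \quad m_0 = x_0,\\
v_t' &= H_{b}^{2}(A_t, G_t) + 2\,H_{a^{1}}(A_t, G_t)\, v_t, \quad v_0 = 0.
\end{align*}
These are identical in form to (\ref{ODE}) with the same driving data, and uniqueness of solutions of linear scalar ODEs then forces $m_t = A_t$ and $v_t = G_t$ on $[0, t_\infty)$; in particular $X_t \sim \mathcal{N}(A_t, G_t)$.

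To close the loop, the Gaussianity of $X_t$ together with the definition of the generalized Gauss transforms yields $\mathrm{E}[f(X_t)] = H_f(A_t, G_t)$ for any polynomially bounded measurable $f$. Applying this to $f \in \{a^{0}, a^{1}, b\}$ identifies the deterministic coefficients of (\ref{exp0}) with $\mathrm{E}[a^{0}(X_t)]$, $\mathrm{E}[a^{1}(X_t)]$ and $\mathrm{E}[b(X_t)]$, which establishes the equivalence of (\ref{exp0}) and (\ref{MVl}) and hence exhibits (\ref{exp}) as the explicit strong solution of the McKean--Vlasov equation (\ref{MVl}).

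The main obstacle I anticipate is the degeneracy at $G_0 = 0$: at the initial time the Gauss transforms formally collapse to pointwise evaluation at $A_0 = x_0$ (the kernel becomes a Dirac mass), so without a hypothesis ensuring smooth behaviour of $H_{a^{j}}$ and $H_b$ as $q \downarrow 0$, the right-hand side of (\ref{ODE}) would fail to be Lipschitz at the starting point and the construction would stall. This is precisely why assumption (\ref{rder0}) has been formulated with continuous extension of $\partial_p H$ and $\partial_q H$ to $\mathbb{R} \times \mathbb{R}_{\geq 0}$: it rescues local Lipschitz continuity on the boundary $q = 0$ and lets the rest of the argument proceed cleanly, while also ensuring that the identification $\mathrm{E}[f(X_t)] = H_f(A_t, G_t)$ remains meaningful up to and including $t = 0$.
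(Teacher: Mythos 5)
Your proposal is correct and arrives at the same conclusion, but it takes a technically different route through both halves of the argument. For part (i), you invoke Picard--Lindel\"of directly on the closed half-plane $\mathbb{R}\times\mathbb{R}_{\geq 0}$ and then argue invariance heuristically ("when $G_t=0$, $G_t'=H_b^2(A_t,0)\geq 0$, so the trajectory does not leave"). The paper instead extends the vector field to all of $\mathbb{R}^2$ by replacing $q$ with $|q|$ (which stays locally Lipschitz by (\ref{rder0})), gets global uniqueness from standard ODE theory on an open set, and then derives the variation-of-constants representation
\[
G_{t}=\int_{0}^{t}H_{b}^{2}\left(  A_{s},\left\vert G_{s}\right\vert\right)  e^{2\int_{s}^{t}H_{a^{1}}\left(  A_{r},\left\vert G_{r}\right\vert\right)  dr}ds,
\]
which is manifestly nonnegative and therefore forces $G_t=|G_t|\geq 0$ without any invariance argument. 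Your invariance reasoning is morally right but strictly weaker: the sign condition $G_t'\geq 0$ on $\{G_t=0\}$ does not by itself preclude the solution from leaving the half-plane when the derivative vanishes (e.g. if $H_b(A_{t_0},0)=0$), and Picard--Lindel\"of on a boundary point of a closed domain needs the extension step anyway. For part (ii), you compute the mean $m_t$ and variance $v_t$ of the linear Gaussian SDE directly, match the resulting linear ODEs to (\ref{ODE}) by uniqueness, and then use the definition of the Gauss transform to read off $\mathrm{E}[f(X_t)]=H_f(A_t,G_t)$. The paper reaches the same identification via the explicit characteristic function of $X_t$ and Fourier inversion of the Gaussian kernel. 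Both are elementary and equivalent; your version is arguably cleaner since it avoids the $2\pi$-factor bookkeeping, while the paper's version makes the role of (\ref{rder0}) at $q=0$ (a point you rightly flag) somewhat more transparent because the characteristic-function computation degenerates gracefully there. If you tighten (i) by adopting the paper's reflect-and-represent argument in place of the informal invariance step, the proof is fully rigorous.
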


\subsection{Affine structures}

\label{affineex} Let us consider affine functions
\begin{align*}
a^{0}(u)  &  =a_{0}^{0}+a_{1}^{0}u,\\
a^{1}(u)  &  =a_{0}^{1}+a_{1}^{1}u,\\
b(u)  &  =b_{0}+b_{1}u.
\end{align*}
Then for $c\equiv a^{0},$~$c\equiv a^{1},$ and $c\equiv b,$ respectively, we
have%
\begin{align*}
H_{c}(p,q)  &  =\frac{1}{\sqrt{2\pi q}}\int c(u)e^{-\frac{(p-u)^{2}}{2q}}du\\
&  =\frac{1}{\sqrt{2\pi q}}\int c_{0}e^{-\frac{(p-u)^{2}}{2q}}du+\frac
{1}{\sqrt{2\pi q}}\int c_{1}ue^{-\frac{(p-u)^{2}}{2q}}du\\
&  =c_{0}+c_{1}p
\end{align*}
with $c(u)=c_{0}+c_{1}u.$ In particular, the $H_{c}(p,q)$ do not depend on
$q,$ and so (\ref{ODE}) simplifies to%
\begin{equation}
A_{t}^{\prime}=a_{0}^{0}+\left(  a_{1}^{0}+a_{0}^{1}\right)  A_{t}+a_{1}%
^{1}A_{t}^{2},\text{ \ \ }A_{0}=x_{0}. \label{ODE1}%
\end{equation}
We first consider the case $a_{1}^{1}=0,$ then (\ref{ODE1}) reads
$A_{t}^{\prime}=a_{0}^{0}+\left(  a_{1}^{0}+a_{0}^{1}\right)  A_{t}$ with
solution
\begin{align}
A_{t}  &  =\left(  x_{0}+\frac{a_{0}^{0}}{a_{1}^{0}+a_{0}^{1}}\right)
e^{\left(  a_{1}^{0}+a_{0}^{1}\right)  t}-\frac{a_{0}^{0}}{a_{1}^{0}+a_{0}%
^{1}}\text{ \ \ if \ \ }a_{1}^{0}+a_{0}^{1}\neq0,\text{ \ \ and}\label{a11}\\
A_{t}  &  =x_{0}+a_{0}^{0}t\text{ \ \ if \ \ }a_{1}^{0}+a_{0}^{1}=0.\nonumber
\end{align}
For the case $a_{1}^{1}\neq0$ the solution (checked by Mathematica) is as
follows. If $D:=\left(  a_{1}^{0}+a_{0}^{1}\right)  ^{2}-4a_{0}^{0}a_{1}%
^{1}<0,$ $a_{1}^{1}\neq0,$
\begin{equation}
A_{t}=-\frac{\left(  a_{1}^{0}+a_{0}^{1}\right)  }{2a_{1}^{1}}+\frac{\sqrt
{-D}}{2a_{1}^{1}}\tan\left[  \ \frac{1}{2}\sqrt{-D}t+\arctan\left[
\frac{a_{1}^{0}+a_{0}^{1}+2a_{1}^{1}x_{0}}{\sqrt{-D}}\right]  \right]  .
\label{dl0}%
\end{equation}
If $D>0,$ $a_{1}^{1}\neq0,$
\begin{equation}
A_{t}=\frac{1}{2}\left(  \sqrt{D}-a_{1}^{0}-a_{0}^{1}\right)  /a_{1}^{1}%
+\frac{x_{0}-\frac{1}{2}\left(  \sqrt{D}-a_{1}^{0}-a_{0}^{1}\right)
/a_{1}^{1}}{1+\frac{1}{2}\left(  \sqrt{D}+a_{1}^{0}+a_{0}^{1}+2a_{1}^{1}%
x_{0}\right)  \left(  e^{\ -\sqrt{D}t}-1\right)  /\sqrt{D}}. \label{dg0}%
\end{equation}
If $D=0,$ $a_{1}^{1}\neq0,$
\begin{equation}
A_{t}=-\frac{a_{1}^{0}+a_{0}^{1}}{2a_{1}^{1}}+\frac{1}{a_{1}^{1}}\frac
{a_{1}^{0}+a_{0}^{1}+2a_{1}^{1}x_{0}}{2-\left(  a_{1}^{0}+a_{0}^{1}+2a_{1}%
^{1}x_{0}\right)  t}. \label{D0}%
\end{equation}
As a result, the McKean-Vlasov SDE
\[
dX_{t}=(a_{0}^{0}+a_{1}^{0}A_{t}+\left(  a_{0}^{1}+a_{1}^{1}A_{t}\right)
X_{t})dt+\left(  b_{0}+b_{1}A_{t}\right)  dW_{t}%
\]
has the following (unique) solution
\begin{align}
X_{t}  &  =x_{0}e^{\int_{0}^{t}\left(  a_{0}^{1}+a_{1}^{1}A_{s}\right)
ds}+\int_{0}^{t}\left(  a_{0}^{0}+a_{1}^{0}A_{s}\right)  e^{\int_{s}%
^{t}\left(  a_{0}^{1}+a_{1}^{1}A_{r}\right)  dr}ds\nonumber\\
&  +\int_{0}^{t}\left(  b_{0}+b_{1}A_{s}\right)  e^{\int_{s}^{t}\left(
a_{0}^{1}+a_{1}^{1}A_{r}\right)  dr}dW_{s}, \label{ex}%
\end{align}
where $A_{t}$ is given by (\ref{a11}), (\ref{dl0}), (\ref{dg0}), or (\ref{D0}).

\begin{example}
By taking in Section~\ref{affineex}%
\[
a(x,u)=a_{1}^{0}u+a_{0}^{1}x,\text{ \ \ }b(x,u)=b_{0},\text{ \ \ }a_{1}%
^{0}+a_{0}^{1}<0,
\]
we get essentially the Shimizu-Yamada model. From (\ref{a11}) we then have%
\[
A_{t}=x_{0}e^{\ \left(  a_{1}^{0}+a_{0}^{1}\right)  t},
\]
and from (\ref{ex}) we then get the explicit solution%
\[
X_{t}=x_{0}e^{\left(  a_{1}^{0}+a_{0}^{1}\right)  t}+\int_{0}^{t}b_{0}%
e^{a_{0}^{1}(t-s)}dW_{s}%
\]
which is Gaussian with mean $x_{0}e^{\left(  a_{1}^{0}+a_{0}^{1}\right)  t}$
and variance $b_{0}^{2}\frac{e^{2a_{0}^{1}t}-1}{2a_{0}^{1}},$ and which is
consistent with the terminology in (\cite{frank2004stochastic}, Section 3.10),
where $a_{1}^{0}+a_{0}^{1}=-\gamma$ and $a_{0}^{1}=-\gamma-\kappa.$
\end{example}

\begin{example}
\label{affineex1} By taking in Section~\ref{affineex}
\[
a(x,u)=\left(  a_{0}^{1}+a_{1}^{1}u\right)  x,\text{ \ \ }b(x,u)=b_{0},
\]
we straightforwardly get from (\ref{dg0}),
\begin{equation}
A_{t}=\frac{x_{0}e^{\ a_{0}^{1}t}}{1-\frac{a_{1}^{1}}{a_{0}^{1}}x_{0}\left(
e^{\ a_{0}^{1}t}-1\right)  }, \label{at}%
\end{equation}
and
\begin{equation}
X_{t}=x_{0}e^{\int_{0}^{t}\left(  a_{0}^{1}+a_{1}^{1}A_{s}\right)  ds}%
+\int_{0}^{t}b_{0}e^{\int_{s}^{t}\left(  a_{0}^{1}+a_{1}^{1}A_{r}\right)
dr}dW_{s}, \label{exs}%
\end{equation}
respectively. Plugging (\ref{at}) into (\ref{exs}) then yields
\[
X_{t}=\frac{x_{0}e^{\ a_{0}^{1}t}}{1-\frac{a_{1}^{1}}{a_{0}^{1}}x_{0}\left(
e^{\ a_{0}^{1}t}-1\right)  }+\frac{b_{0}e^{\ a_{0}^{1}t}}{1-\frac{a_{1}^{1}%
}{a_{0}^{1}}x_{0}\left(  e^{\ a_{0}^{1}t}-1\right)  }\Gamma_{t}%
\]
with Gaussian $\Gamma_{t}=\int_{0}^{t}\left(  1-\frac{a_{1}^{1}}{a_{0}^{1}%
}x_{0}\left(  e^{\ a_{0}^{1}s}-1\right)  \right)  e^{\ -a_{0}^{1}s}dW_{s}.$ In
particular, if $a_{0}^{1}=0$ we get%
\[
A_{t}=\frac{x_{0}}{1-a_{1}^{1}x_{0}t},
\]
and
\[
X_{t}=\frac{x_{0}}{1-a_{1}^{1}x_{0}t}+b_{0}\int_{0}^{t}\frac{1-a_{1}^{1}%
x_{0}s}{1-a_{1}^{1}x_{0}t}dW_{s}.
\]

\end{example}

\begin{rem}
\label{nonexp} From Example~\ref{affineex1} it is clear that if $a_{1}^{1}%
\neq0$, the affine McKean-Vlasov solution may explode in finite time. This is
not surprising since in this case the derivative $\partial_{u}a(x,u)$\ is
unbounded and so the main results in \cite{antonelli2002rate} do not apply. On
the other hand, it is easy to check that for the case $a_{1}^{1}\neq0,$ the
affine solutions in Section~\ref{affineex} are non-exploding whenever,
\[
D\geq0\text{ and }\sqrt{D}\geq a_{1}^{0}+a_{0}^{1}+2a_{1}^{1}x_{0}.
\]
That is, in the case $D\geq0,$ $a_{1}^{1}\neq0,$ it is always possible to
choose $x_{0}$ such that the solution does or does not explode.
\end{rem}

\subsection{Kuramoto-Shinomoto-Sakaguchi type models}

In the Kuramoto-Shinomoto-Sakaguchi model  the nonlinear one-dimensional
Fokker-Planck equation (\ref{eq:FP}) is considered in the domain $(t,x)\in\left(
0,\infty\right)  \times\left(  0,2\pi\right),$ where $b=1,$ \(a(x,y)  =a(x-y)=-\frac{d}{dx}U_{MF}(x-y)\) with
\begin{align*}
U_{MF}(z)  =-\sum_{n=1}^{\infty}c_{n}\cos(nz)
\end{align*}
and the process starts in \(u\) at time zero, for some fixed $u\in\left(
0,2\pi\right),$ see for details \cite{frank2004stochastic} (Sect. 5.3.2).
Thus $a$ is a $2\pi$-periodic function related to a $2\pi$-periodic potential.
Let us  consider the corresponding McKean-Vlasov SDE %
\begin{equation}
\left\{
\begin{array}
[c]{ll}%
X_{t} & =u+\int_{0}^{t}\int_{\mathbb{R}}a(X_{s}-y)\mu_{s}(dy)ds+W_{t}\\
\mu_{t} & =\mathrm{Law}(X_{t}),\quad t\in\lbrack0,T],
\end{array}
\right.  \label{eq:sde1}%
\end{equation}
and define the integer valued function $k(x):=\max\left\{  j\in\mathbb{Z}:2\pi j\leq
x\right\}.$ Obviously, the process%
\begin{equation}
Y_{t}:=X_{t}-2\pi k(X_{t})\label{dy}%
\end{equation}
has state space $\left[  0,2\pi\right).$ Let $\rho_{t}(x;u)=\rho_{t}(x)$ be
the density of $Y_{t},$ which is concentrated on $\left(  0,2\pi\right).$
Note that for any $2\pi$-periodic function $f$ we have by (\ref{dy}) that%
\[
\int_{0}^{2\pi}f(x)\rho_{t}(x)dx=\mathrm{E}\left[  f(Y_{t})\right]  =\mathrm{E}\left[
f(X_{t})\right]  =\int_{-\infty}^{\infty}f(x)\mu_{t}(x)dx,
\]
and for any test function $g$ with support in $(0,2\pi)$ it holds that,%
\begin{align*}
\int_{0}^{2\pi}g(x)\rho_{t}(x)dx &  =\mathrm{E}\left[  g(Y_{t})\right]  =\mathrm{E}\left[
g(X_{t}-2\pi k(X_{t}))\right]  \\
&  =\sum_{j\in\mathbb{Z}}\int_{2\pi j}^{2\pi\left(  j+1\right)  }g(x-2\pi
j)\mu_{t}(x)dx\\
&  =\int_{0}^{2\pi}g(z)\sum_{j\in\mathbb{Z}}\mu_{t}(z+2\pi j)dz,
\end{align*}
that is%
\begin{equation}
\rho_{t}(z)=\sum_{j\in\mathbb{Z}}\mu_{t}(z+2\pi j)\label{to}%
\end{equation}
for \(z\in (0,2\pi).\)
Thus, in particular,%
\begin{equation}
\int_{\mathbb{R}}a(x-y)\mu_{t}(y)dy=\int_{0}^{2\pi}a(x-y)\rho_{t}%
(y)dy.\label{on}%
\end{equation}
an  (\ref{eq:sde1}) is equivalent to%
\begin{align*}
X_{t} &  =u+\int_{0}^{t}\int_{0}^{2\pi}a(X_{s}-y)\rho_{s}(y)dyds+W_{t}\\
\rho_{t} &  =\mathrm{Law}(Y_{t}),\quad t\in\lbrack0,T],
\end{align*}
(see (\ref{dy})). Note that by using (\ref{to}) and (\ref{on}) it
straightforwardly follows that $\rho_{t}(x)=\rho_{t}(x;u)$ satisfies
(\ref{eq:FP}) in the above context. Instead of taking the scalar product in
$L_{2}\left(  \mathbb{R}^{d},w\right),$ we now consider the scalar product in
$L_{2}\left(  \left[  0,2\pi\right)  \right),$ i.e. $w\equiv1,$ and take for
$\left(  \varphi_{k}\right)  $ the standard (total) orthonormal trigonometric
basis consisting of the $2\pi$-periodic functions $\left(  2\pi\right)
^{-1/2},$ $\pi^{-1/2}\cos\left(  my\right)  $ and $\pi^{-1/2}\sin\left(
my\right)  ,$ $m=1,2,\ldots $ suitably ordered. Thus, by defining%
\begin{align*}
\gamma_{k}(t)   =\int_{0}^{2\pi}\rho_{t}(y)\varphi_{k}\left(  y\right)
dy, \quad
\alpha_{k}\left(  x\right)     =\int_{0}^{2\pi}a(x-y)\varphi_{k}\left(
y\right)  dy,
\end{align*}
one has%
\[
\text{ }\rho_{t}(y)=\sum_{k=0}^{\infty}\gamma_{k}(t)\varphi_{k}\left(
y\right)  \text{\ \ \ and }\int_{\mathbb{R}}a(x-y)\mu_{t}(y)dy=\sum
_{k=0}^{\infty}\alpha_{k}\left(  x\right)  \gamma_{k}(t),
\]
due to (\ref{on}). That is (\ref{eq:par_proj}) reads,%
\[
X_{t}^{i,K,N}=u+\int_{0}^{t}\sum_{k=0}^{K}\gamma_{k}^{N}(s)\alpha_{k}%
(X_{s}^{i,K,N})\,ds+\,W_{t}^{i}%
\]
with $\gamma_{k}^{N}$ as in (\ref{heu}). Next we may follow
(\ref{eq: EulerPartProj}) for the corresponding Euler scheme. Finally, the
estimator for the density $\rho_{t}$ reads%
\begin{equation}
\widehat{\rho}_{t}^{K_{\text{test}},K,N}(y):=\sum_{k=1}^{K_{\text{test}}%
}\gamma_{k}^{N}(t)\varphi_{k}(y)\label{den}%
\end{equation}
(cf. the estimator for $\mu_{t}$ in Section 3.1).

\section{Numerical test cases}

\label{sec:num}

\subsection{Affine MVSDE models}

Let us now test the numerical performance of the projected particle approach
for the processes discussed in Section~\ref{expls}. Consider the situation
where
\[
a^{0}(u)=(1+u^{M})\exp(-u^{2}/2),\quad a^{1}(u)=\rho\exp(-u^{2}/2),\quad
b(x,u)\equiv\sigma
\]
for some $M>0,$ $\rho\geq0$ and $\sigma>0.$ Then, by using the Hermite
functions \eqref{eq:herm_func} with $w\equiv1$ and the well-known identity%
\[
u^{M}=\frac{1}{2^{M}}\sum_{m=0}^{\left\lfloor \frac{M}{2}\right\rfloor }%
\frac{M!}{m!(M-2m)!}H_{M-2m}(u),
\]
we derive straightforwardly,
\begin{align*}
\alpha_{k}(x)  &  =\int(1+u^{M})\exp(-u^{2})\overline{H}_{k}(u)\,du+\rho
x\cdot\int\exp(-u^{2})\overline{H}_{k}(u)\,du\\
&  =%
\begin{cases}
0 & \text{if \ \ }k>M \text{ or } k \text{ is uneven }\\
\frac{\pi^{1/4}}{2^{M-k/2}}\frac{M!}{\left(  \frac{M-k}{2}\right)  !\sqrt{k!}}
& \text{if \ \ }0\leq k\leq M,\text{ \ s.t. \ }M-k\text{ \ is even}%
\end{cases}
\\
&  +\left(  1+\rho x\right)  \cdot%
\begin{cases}
\pi^{1/4} & \text{if \ \ }k=0,\\
0 & \text{if \ \ }k>0.
\end{cases}
\end{align*}
On the other hand, by some algebra we get
\begin{align*}
H_{a^{0}}(p,q)  &  =\frac{1}{\sqrt{2\pi q}}\int(1+u^{M})e^{-u^{2}/2}%
e^{-\frac{(p-u)^{2}}{2q}}du=\frac{1}{\sqrt{1+q}}e^{-\frac{p^{2}}{2(1+q)}}\\
&  +\frac{1}{\sqrt{2\pi(1+q)}}e^{-\frac{p^{2}}{2(1+q)}}\int\left(  \sqrt
{\frac{q}{1+q}}y+\frac{p}{1+q}\right)  ^{M}e^{-y^{2}/2}dy,
\end{align*}
and
\[
H_{a^{1}}(p,q)=\frac{\rho}{\sqrt{1+q}}e^{-\frac{p^{2}}{2(1+q)}}.
\]
The explicit solution of the MVSDE
\begin{equation}
dX_{t}=\left(  \mathsf{E}\left[  (1+X_{t}^{M})\exp(-X_{t}^{2}/2)\right]
+\rho\,X_{t}\mathsf{E}\left[  \exp(-X_{t}^{2}/2)\right]  \right)  \,dt+\sigma
dW_{t} \label{eq:xt_affine_test}%
\end{equation}
is given by (\ref{ODE}) and \eqref{exp}. Hence the density of $X_{t}$ is
normal with mean
\[
x_{0}e^{\int_{0}^{t}H_{a^{1}}\left(  A_{s},G_{s}\right)  ds}+\int_{0}%
^{t}H_{a^{0}}\left(  A_{s},G_{s}\right)  e^{\int_{s}^{t}H_{a^{1}}\left(
A_{r},G_{r}\right)  dr}\,ds
\]
and variance
\[
\sigma^{2}\int_{0}^{t}e^{2\int_{s}^{t}H_{a^{1}}\left(  A_{r},G_{r}\right)
dr}\,ds.
\]
In our numerical example we take $M=2,$ $\rho=-1,$ $\sigma=1$ and $x_{0}=0.$
Our aim is to approximate the normal density of $X_{1}$ by using our projected
particle method based on Hermite basis. To this end, we first simulate $N$
paths of the process $\bar{X}^{i,K,N},$ defined in \eqref{eq: EulerPartProj}
with a time step $h=0.02.$ Since $M=2,$ the case $K=2$ corresponds to a
perfect approximation of the integral $\int_{\mathbb{R}^{d}}a(x,y)\mu
_{t}(y)\,dy=\sum_{k=0}^{2}\alpha_{k}(x)\gamma_{k}(t).$ Next using the obtained
sample $\bar{X}_{1}^{1,K,N},\ldots,\bar{X}_{1}^{N,K,N},$ we construct
projection estimates for the density of $X_{1}$ by using Hermite basis
functions of order $K_{\mathrm{test}}\in\{1,2,\ldots,10\}$. The mean ($0.727$)
and the variance ($0.487$) of the true normal density are approximated by
solving the ODE system~ \eqref{ODE} using Euler method with time step
$0.0001.$ The Figure~\ref{fig: affine_density} shows the box plots of $L_{2}%
$-distance between $\mu_{1}$ and $\widehat{\mu}_{t}^{K_{\text{test}},K,N}$ for
$K\in\{1,2\}$ based on $50$ different replications of the process $\bar
{X}^{i,K,N}.$ {As can be seen, the choice of $K_{\mathrm{test}}$ is crucial
and depends on $K$ and $N$. It also should be stressed that the truncation
error dominates the statistical one already for medium sample sizes. An
optimal balance between $K,$ $N$ and $K_{\mathrm{test}}$ can be found by
analyzing the right hand side of \eqref{eq:dens_error} under various
assumptions on the coefficients $(A_{k,\alpha}),$ $(A_{k,\beta})$ and
$(\gamma_{k}$).} \begin{figure}[pt]
\centering
\includegraphics[width=\linewidth]{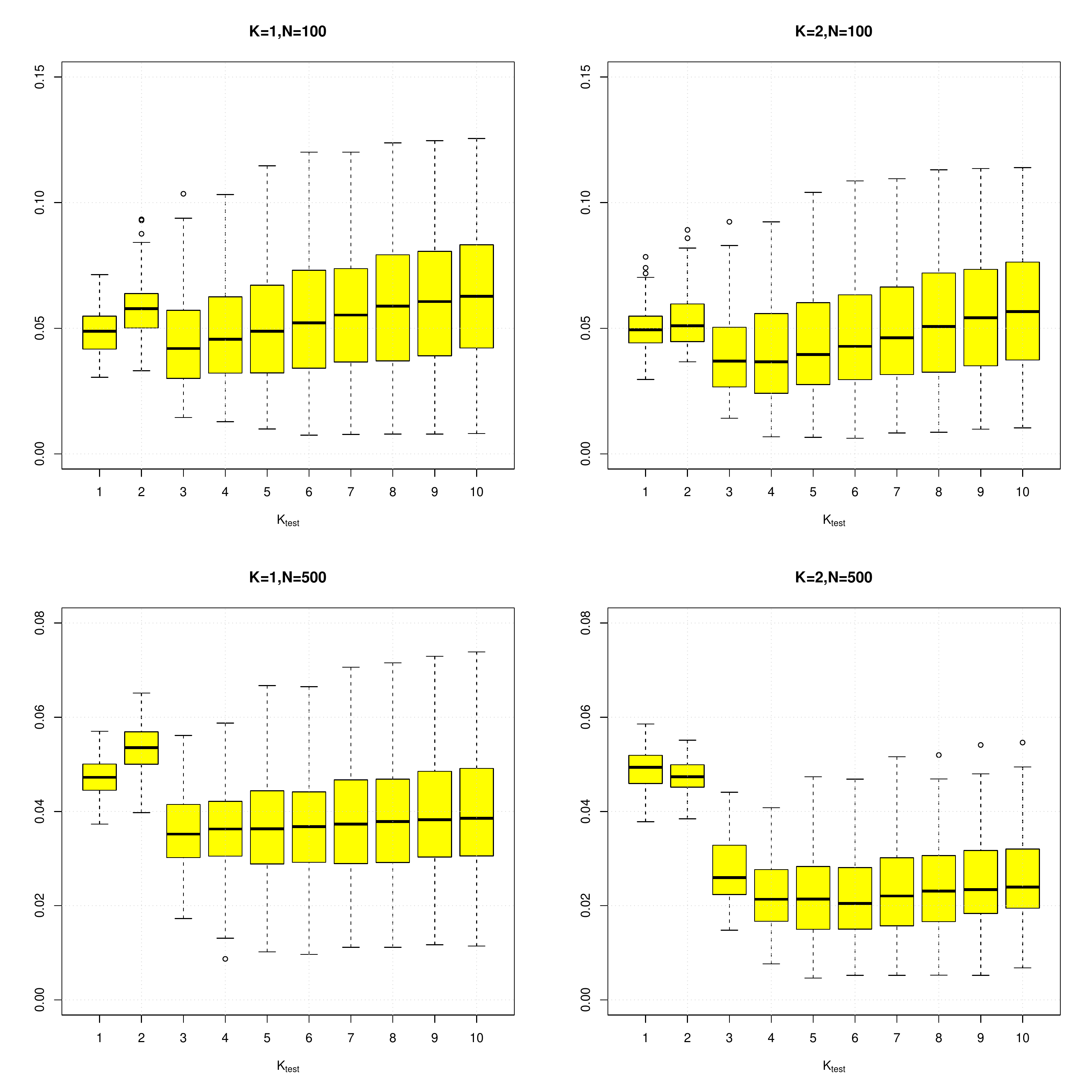} \caption{Box plots
of $L_{2}$-distances between the true normal density of $X_{1}$ with mean
$0.727$ and variance $0.487$ in the model \eqref{eq:xt_affine_test} and its
estimates obtained by using $N\in\{100,500\}$ paths, Hermite basis functions
up to order $K\in\{1,2\}$ to approximate coefficients $\alpha_{k}$ and Hermite
basis functions up to order $K_{\mathrm{test}}\in\{1,2,\ldots,10\}$ for
projection based density estimation. }%
\label{fig: affine_density}%
\end{figure}

\subsection{Convolution-type MVSDE models}

Consider the MVSDE of the form:
\[
dX_{t}=\E_{X^{\prime}}\left[  Q(X_{t}-X_{t}^{\prime})\right]  \,dt+\sigma
\,dW_{t},\quad t\in\lbrack0,1],\quad X_{0}\sim\mathcal{N}(0,1),
\]
i.e. of the form \eqref{eq:sde} with $a(x,y)=Q(x-y),$ $b(x,y)=\sigma$ and
$\mu_{0}(x)=(1/\sqrt{2\pi})e^{-x^{2}/2}.$ Let us again use the Hermite basis
to approximate the density of $X_{t}$ for any $t\in\lbrack0,1].$ In the case
$Q(x)=e^{-x^{2}/2},$ we explicitly derive {via repeated integration by parts}
\begin{align*}
\int_{\mathbb{R}}e^{-(x-y)^{2}/2-x^{2}/2}H_{n}(x)\,dx  &  =\frac{e^{-y^{2}/4}%
}{2}\int e^{-(z-y)^{2}/4}H_{n}(z/2)\,dz\\
&  =\sqrt{\pi}\,\frac{e^{-y^{2}/4}}{2}\left(  \frac{1}{2}\right)
^{n-1}(2y)^{n}.
\end{align*}
As a result
\[
\alpha_{n}(y)=\int e^{-(x-y)^{2}/2-x^{2}/2}\overline{H}_{n}(x)\,dx=\pi
^{1/4}\left(  \frac{1}{2}\right)  ^{n/2}\frac{y^{n}}{\sqrt{n!}}e^{-y^{2}/4},
\]
where $\overline{H}_{n}$ stands for the normalized Hermite polynomial of order
$n.$ We take $\sigma=0.1.$ Using the Euler scheme \eqref{eq:sEuler} with time
step $h=1/L=0.01$, we first simulate $N=500$ paths of the time discretized
process $\bar{X}^{\cdot,N}.$ Next, by means of the closed form expressions for
$\alpha_{n},$ we generate $N$ paths of the projected approximating process
$\bar{X}^{\cdot,K,N},$ $K\in\{1,\ldots,20\}$ {(see (\ref{eq: EulerPartProj}%
))}, using the same Wiener increments as for $\bar{X}^{\cdot,N},$ so that the
approximations $\bar{X}^{\cdot,N}$ and $\bar{X}^{\cdot,K,N}$ are coupled.
Finally, we compute the strong approximation error
\[
E_{N,K}=\sqrt{\frac{1}{N}\sum_{i=1}^{N}\bigl(\bar{X}_{1}^{i,K,N}-\bar{X}%
_{1}^{i,N}\bigr)^{2}}%
\]
{of the projective system relative to the system \eqref{eq:sEuler}} and record
times needed to compute approximations $\bar{X}_{1}^{\cdot,N}$ and $\bar
{X}_{1}^{\cdot,K,N},$ respectively. Figure~\ref{fig: inter_res} shows the
{(natural)} logarithm of $E_{N,K}$ versus the logarithm of the corresponding
{(relative) computational time gain defined as (comp. time due to
\eqref{eq:sEuler} $-$ comp. time due to (\ref{eq: EulerPartProj}))/comp. time
due to \eqref{eq:sEuler},} for values $K\in\{1,\ldots,20\}$. As can be seen,
the relation between logarithmic strong error and logarithmic computational
time gain can be well approximated by a linear function. On the right-hand
side of Figure~\ref{fig: inter_res} we depict the projection estimate for the
density of $X_{1}$ corresponding to $K=20.$ {Note that we compare two particle
systems (projected and non projected ones) for a fixed $N$ and are mainly
interested in the dependence of {their strong distance} on $K.$ In fact, the
choice of $N$ doesn't have much influence on $E_{N,K},$ provided $N$ is large
enough.}

\begin{figure}[pt]
\centering
\includegraphics[width=\linewidth]{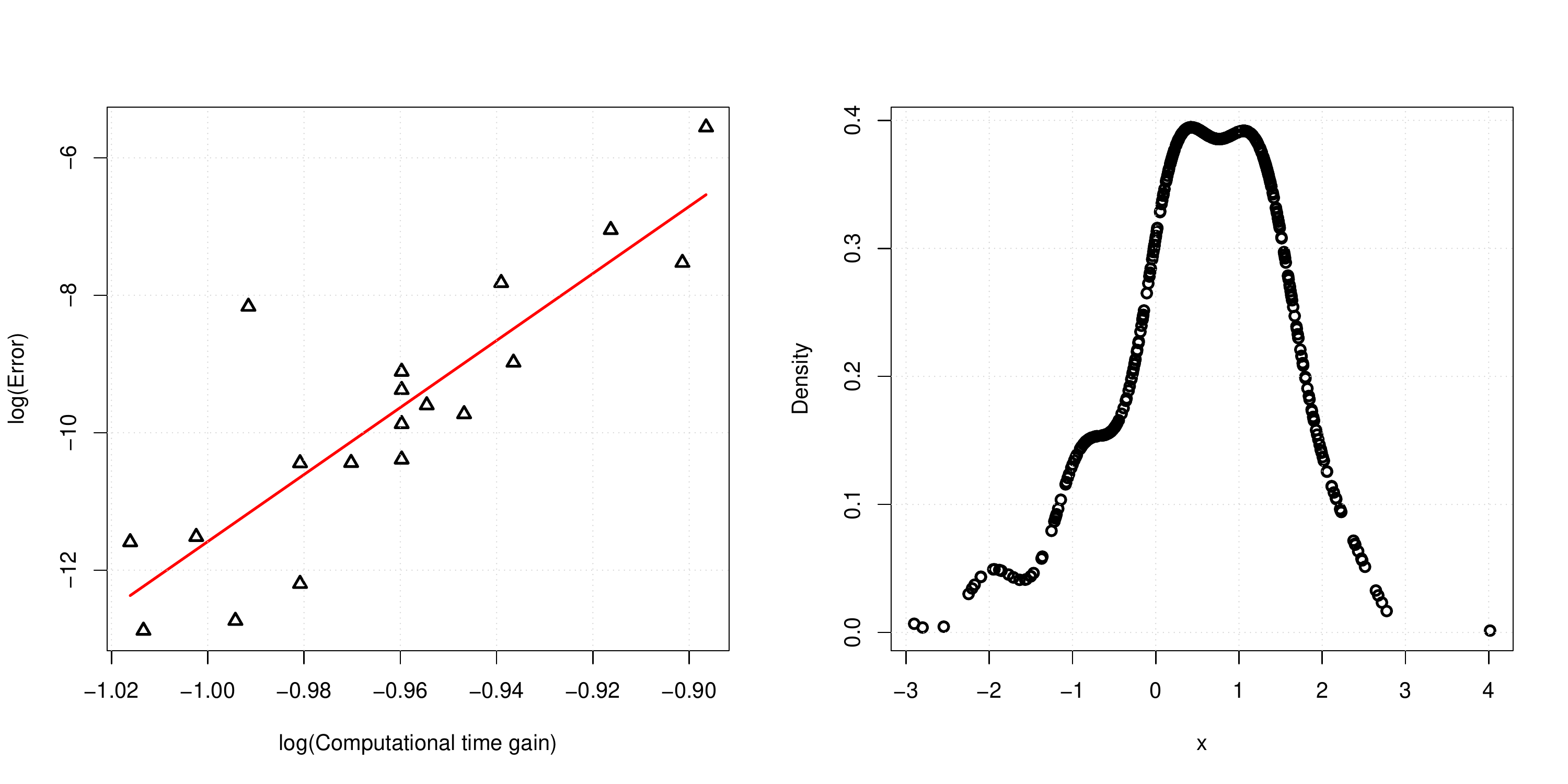} \caption{Left:
Strong error $E_{500,K}$ between the solution of projected (see
\eqref{eq: EulerPartProj}) and non-projected (see \eqref{eq:sEuler})
time-discretized particle systems versus the difference (gain) in
computational time. Right: Estimated density of $X_{1}$ using $21$ basis
functions. }%
\label{fig: inter_res}%
\end{figure}

\section{Proofs}

\label{sec:proofs}

\subsection{Proof of Theorem~\ref{thmc}}

Let us introduce
\[
\mathfrak{a}_{K,N}(x,y):=\frac{1}{N}\sum_{j=1}^{N}\sum_{k=1}^{K}\alpha
_{k}(x)\varphi_{k}(y^{j})=\frac{1}{N}\sum_{j=1}^{N}\sum_{k=1}^{K}\varphi
_{k}(y^{j})\int a(x,u)\varphi_{k}(u)w(u)du,
\]%
\[
\mathfrak{b}_{K,N}(x,y):=\frac{1}{N}\sum_{j=1}^{N}\sum_{k=1}^{K}\beta
_{k}(x)\varphi_{k}(y^{j})=\frac{1}{N}\sum_{j=1}^{N}\sum_{k=1}^{K}\varphi
_{k}(y^{j})\int b(x,u)\varphi_{k}(u)w(u)du,
\]
and%
\begin{align*}
\mathfrak{a}_{s}(x)  &  :=\int_{\mathbb{R}^{d}}a(x,u)\mu_{s}(du)ds,\\
\mathfrak{b}_{s}(x)  &  :=\int_{\mathbb{R}^{d}}b(x,u)\mu_{s}(du)ds
\end{align*}
for any $x\in\mathbb{R}^{d},\text{ }y\in\mathbb{R}^{d\times N}.$ We so have
that%
\begin{align*}
\Delta_{t}^{i}:=X_{t}^{i,K,N}-X_{t}^{i}  &  =\int_{0}^{t}\left(
{\mathfrak{a}_{K,N}}({X}_{s}^{i,K,N},X_{s}^{K,N})\,-\mathfrak{a}_{s}(X_{s}%
^{i})\right)  ds\\
&  +\int_{0}^{t}\left(  \mathfrak{b}_{K,N}({X}_{s}^{i,K,N},X_{s}%
^{K,N})\,-\mathfrak{b}_{s}(X_{s}^{i})\right)  dW_{s}^{i},
\end{align*}
where $W^{i},$ $i=1,...,N,$ are i.i.d. copies of the $m$-dimensional Wiener
process $W.$ Hence,
\begin{align}
\left\vert \Delta_{t}^{i}\right\vert ^{p}  &  \leq2^{p-1}t^{p-1}\int_{0}%
^{t}\left\vert {\mathfrak{a}_{K,N}}({X}_{s}^{i,K,N},X_{s}^{K,N}%
)\,-\mathfrak{a}_{s}(X_{s}^{i})\right\vert ^{p}ds\label{ft}\\
&  +2^{p-1}d^{p-1}\sum_{q=1}^{d}\left\vert \int_{0}^{t}\left(  \mathfrak{b}%
_{K,N}^{q}({X}_{s}^{i,K,N},X_{s}^{K,N})\,-\mathfrak{b}_{s}^{q}(X_{s}%
^{i})\right)  dW_{s}^{i}\right\vert ^{p},\nonumber
\end{align}
where%
\begin{equation}
\mathfrak{b}_{K,N}^{q}:=\left(  \mathfrak{b}_{K,N}^{q,1},...,\mathfrak{b}%
_{K,N}^{q,m}\right)  ,\text{ \ \ }q=1,...,d, \label{defbq}%
\end{equation}
denote the rows of the $\mathbb{R}^{d\times m}$ valued $\mathfrak{b}_{K,N},$
and so we have with%
\[
\overline{\Delta_{t}^{p}}:=\frac{1}{N}\sum_{i=1}^{N}\sup_{s\in\lbrack
0,t]}\left\vert \Delta_{s}^{i}\right\vert ^{p}%
\]
the bound
\begin{align}
\overline{\Delta_{t}^{p}}  &  \leq2^{p-1}t^{p-1}\frac{1}{N}\sum_{i=1}^{N}%
\int_{0}^{t}\left\vert {\mathfrak{a}_{K,N}}({X}_{s}^{i,K,N},X_{s}%
^{K,N})\,-\mathfrak{a}_{s}(X_{s}^{i})\right\vert ^{p}ds\nonumber\\
&  +2^{p-1}d^{p-1}\sum_{q=1}^{d}\frac{1}{N}\sum_{i=1}^{N}\sup_{s\in
\lbrack0,t]}\left\vert \int_{0}^{s}\left(  \mathfrak{b}_{K,N}^{q}({X}%
_{s}^{i,K,N},X_{s}^{K,N})\,-\mathfrak{b}_{s}^{q}(X_{s}^{i})\right)  dW_{s}%
^{i}\right\vert ^{p}\nonumber\\
&  =:2^{p-1}t^{p-1}\,\text{Term}_{1}+2^{p-1}d^{p-1}\,\text{Term}_{2}.
\label{huu}%
\end{align}
Assumption (AC) implies%
\begin{align}
\left\vert \mathfrak{a}_{K,N}(x,y)-\mathfrak{a}_{K,N}(x^{\prime},y^{\prime
})\right\vert  &  =\left\vert \frac{1}{N}\sum_{j=1}^{N}\sum_{k=1}^{K}\left(
\alpha_{k}(x)\varphi_{k}(y_{j})-\alpha_{k}(x^{\prime})\varphi_{k}%
(y_{j}^{\prime})\right)  \right\vert \nonumber\\
&  \leq\frac{1}{N}\sum_{j=1}^{N}\sum_{k=1}^{K}\left\vert \alpha_{k}%
(x)-\alpha_{k}(x^{\prime})\right\vert \left\vert \varphi_{k}(y_{j}^{\prime
})\right\vert \nonumber\\
&  +\frac{1}{N}\sum_{j=1}^{N}\sum_{k=1}^{K}\left\vert \alpha_{k}(x)\right\vert
\left\vert \varphi_{k}(y_{j})-\varphi_{k}(y_{j}^{\prime})\right\vert
\nonumber\\
&  \leq\left\vert x-x^{\prime}\right\vert D_{\varphi}B_{\alpha}+\frac
{L_{\varphi}A_{\alpha}}{N}(1+\left\vert x\right\vert )\sum_{j=1}^{N}\left\vert
y_{j}-y_{j}^{\prime}\right\vert . \label{pm0}%
\end{align}
Hence%
\begin{align*}
\left\vert \mathfrak{a}_{K,N}(x,y)-\mathfrak{a}_{K,N}(x^{\prime},y^{\prime
})\right\vert ^{p}  &  \leq2^{p-1}\left\vert x-x^{\prime}\right\vert
^{p}D_{\varphi}^{p}B_{\alpha}^{p}\\
&  +2^{p-1}L_{\varphi}^{p}A_{\alpha}^{p}(1+\left\vert x\right\vert )^{p}%
\frac{1}{N}\sum_{j=1}^{N}\left\vert y_{j}-y_{j}^{\prime}\right\vert ^{p}.
\end{align*}
So it holds that%
\begin{align*}
\left\vert \mathfrak{a}_{K,N}(X_{s}^{i},X_{s})-\mathfrak{a}_{K,N}({X}%
_{s}^{i,K,N},X_{s}^{K,N})\,\right\vert ^{p}  &  \leq2^{p-1}D_{\varphi}%
^{p}B_{\alpha}^{p}|\Delta_{s}^{i}|^{p}\\
&  +2^{p-1}L_{\varphi}^{p}A_{\alpha}^{p}(1+\left\vert X_{s}^{i}\right\vert
)^{p}\frac{1}{N}\sum_{j=1}^{N}|\Delta_{s}^{j}|^{p},
\end{align*}
and then it follows that, with regard to Term$_{1}$,
\begin{align}
\mathrm{E}\left[  \text{Term}_{1}\right]   &  \leq2^{2p-2}D_{\varphi}%
^{p}B_{\alpha}^{p}\int_{0}^{t}\mathrm{E}\left[  \overline{\Delta_{s}^{p}%
}\right]  ds\nonumber\\
&  +2^{2p-2}L_{\varphi}^{p}A_{\alpha}^{p}\int_{0}^{t}\mathrm{E}\left[
\overline{\Delta_{s}^{p}}\cdot\frac{1}{N}\sum_{i=1}^{N}(1+\left\vert X_{s}%
^{i}\right\vert )^{p}\right]  ds\nonumber\\
&  +2^{p-1}\frac{1}{N}\sum_{i=1}^{N}\int_{0}^{t}\mathrm{E}\left[  \left\vert
{\mathfrak{a}_{K,N}}({X}_{s}^{i},X_{s})\,-\mathfrak{a}_{s}(X_{s}%
^{i})\right\vert ^{p}\right]  ds. \label{t11}%
\end{align}
Let us now consider the middle term. Set%
\[
\zeta_{s,N}:=\frac{1}{N}\sum_{i=1}^{N}(1+\left\vert X_{s}^{i}\right\vert
)^{p}-\frac{1}{N}\sum_{i=1}^{N}\mathrm{E}\left[  (1+\left\vert X_{s}%
^{i}\right\vert )^{p}\right]
\]
so that
\begin{align}
\mathrm{E}\left[  \overline{\Delta_{s}^{p}}\cdot\frac{1}{N}\sum_{i=1}%
^{N}(1+\left\vert X_{s}^{i}\right\vert )^{p}\right]   &  =\frac{1}{N}%
\sum_{i=1}^{N}\mathrm{E}\left[  (1+\left\vert X_{s}^{i}\right\vert
)^{p}\right]  \cdot\mathrm{E}\left[  \overline{\Delta_{s}^{p}}\right]
\nonumber\\
&  +\mathrm{E}\left[  \zeta_{s,N}\cdot\overline{\Delta_{s}^{p}}\right]  .
\label{hu1}%
\end{align}
For arbitrary but fixed $\theta>0,$ it holds that%
\begin{equation}
\mathrm{E}\left[  \zeta_{s,N}\cdot\overline{\Delta_{s}^{p}}\right]
=\mathrm{E}\left[  \zeta_{s,N}\cdot\overline{\Delta_{s}^{p}}\,1_{\left\{
\zeta_{s,N}\leq\theta\right\}  }\right]  +\mathrm{E}\left[  \zeta_{s,N}%
\cdot\overline{\Delta_{s}^{p}}\,1_{\left\{  \zeta_{s,N}>\theta\right\}
}\right]  , \label{hu2}%
\end{equation}
where on the one hand%
\begin{equation}
\mathrm{E}\left[  \zeta_{s,N}\cdot\overline{\Delta_{s}^{p}}\,1_{\left\{
\zeta_{s,N}\leq\theta\right\}  }\right]  \leq\theta\mathrm{E}\left[
\overline{\Delta_{s}^{p}}\right]  \label{hu3}%
\end{equation}
and on the other%
\begin{equation}
\mathrm{E}\left[  \zeta_{s,N}\cdot\overline{\Delta_{s}^{p}}\,1_{\left\{
\zeta_{s,N}>\theta\right\}  }\right]  \leq\sqrt{\mathrm{E}\left[  \zeta
_{s,N}^{2}1_{\left\{  \zeta_{s,N}>\theta\right\}  }\right]  }\sqrt
{\mathrm{E}\left[  \left(  \overline{\Delta_{s}^{p}}\right)  ^{2}\right]  }.
\label{hu4}%
\end{equation}
Due to \eqref{eq: xmom} we have that for any $\eta>0,$ there exists
$C_{\theta,\eta}>0$ such that%
\[
\mathrm{E}\left[  \zeta_{s,N}^{2}1_{\left\{  \zeta_{s,N}>\theta\right\}
}\right]  =\frac{1}{N}\mathrm{E}\left[  \left(  \sqrt{N}\zeta_{s,N}\right)
^{2}1_{\left\{  \sqrt{N}\zeta_{s,N}>\theta\sqrt{N}\right\}  }\right]
\leq\frac{C_{\theta,\eta}^{2}}{N^{\eta+1}},\text{ \ \ \ }0\leq s\leq T,
\]
for $N$ large enough and
\begin{align}
\mathrm{E}\left[  \left(  \overline{\Delta_{s}^{p}}\right)  ^{2}\right]   &
\leq\mathrm{E}\left[  \frac{1}{N}\sum_{j=1}^{N}\sup_{r\in\lbrack
0,T]}\left\vert \Delta_{r}^{j}\right\vert ^{2p}\right]  =\mathrm{E}\left[
\sup_{r\in\lbrack0,T]}\left\vert \Delta_{r}^{\cdot}\right\vert ^{2p}\right]
\nonumber\\
&  =\mathrm{E}\left[  \sup_{r\in\lbrack0,T]}\left\vert X_{r}^{\cdot,K,N}%
-X_{r}^{\cdot}\right\vert ^{2p}\right] \nonumber\\
&  \leq2^{2p-1}\mathrm{E}\left[  \sup_{r\in\lbrack0,T]}\left\vert X_{r}%
^{\cdot,K,N}\right\vert ^{2p}\right]  +2^{2p-1}\mathrm{E}\left[  \sup
_{r\in\lbrack0,T]}\left\vert X_{r}^{\cdot}\right\vert ^{2p}\right] \nonumber\\
&  \leq D_{1}+D_{2}=:D^{2}, \label{hu5}%
\end{align}
where due to Theorem~\ref{thm:moments}, Appendix~\ref{app},
\[
2^{2p-1}\mathrm{E}\left[  \sup_{r\in\lbrack0,T]}\left\vert X_{r}^{\cdot
,K,N}\right\vert ^{2p}\right]  \leq D_{1}<\infty\text{ \ \ uniform in }N\text{
and }K,
\]
and
\[
D_{2}:=2^{2p-1}\mathrm{E}\left[  \sup_{r\in\lbrack0,T]}\left\vert X_{r}%
^{\cdot}\right\vert ^{2p}\right]  <\infty
\]
due to (\ref{eq: xmom}). Thus, by combining (\ref{hu1})--(\ref{hu5}), one has
\[
\mathrm{E}\left[  \overline{\Delta_{s}^{p}}\cdot\frac{1}{N}\sum_{i=1}%
^{N}(1+\left\vert X_{s}^{i}\right\vert )^{p}\right]  \leq F_{1}^{p}%
\cdot\mathrm{E}\left[  \overline{\Delta_{s}^{p}}\right]  +\frac{F_{2}%
}{N^{p/2+1/2}}%
\]
with $F_{1}:=\theta^{1/p}+\sup_{0\leq s\leq T}\left\Vert 1+\left\vert
X_{s}\right\vert \right\Vert _{p}$ and $F_{2}:=C_{\theta,p}D,$ where we have
taken $\eta=p.$ Set now
\[
H(s):=\mathrm{E}\left[  \overline{\Delta_{s}^{p}}\right]  ,
\]
then the estimate (\ref{t11}) (cf. (\ref{huu})) reads%
\begin{gather}
\frac{1}{N}\sum_{i=1}^{N}\int_{0}^{t}\mathrm{E}\left[  \left\vert
{\mathfrak{a}_{K,N}}({X}_{s}^{i,K,N},X_{s}^{K,N})\,-\mathfrak{a}_{s}(X_{s}%
^{i})\right\vert ^{p}\right]  ds\nonumber\\
\leq\left(  2^{2p-2}D_{\varphi}^{p}B_{\alpha}^{p}+2^{2p-2}L_{\varphi}%
^{p}A_{\alpha}^{p}F_{1}^{p}\right)  \int_{0}^{t}H(s)ds+2^{2p-2}L_{\varphi}%
^{p}A_{\alpha}^{p}\frac{F_{2}}{N^{p/2+1/2}}t\nonumber\\
+2^{p-1}\frac{1}{N}\sum_{i=1}^{N}\int_{0}^{t}\mathrm{E}\left[  \left\vert
{\mathfrak{a}_{K,N}}({X}_{s}^{i},X_{s})\,-\mathfrak{a}_{s}(X_{s}%
^{i})\right\vert ^{p}\right]  ds. \label{t12}%
\end{gather}
Regarding the term Term$_{2}$ we call upon the Burkholder-Davis-Gundy's
inequality which states that for any $p\geq1,$%
\begin{align*}
&  \left\Vert \sup_{s\in\lbrack0,t]}\left\vert \int_{0}^{s}\left(
\mathfrak{b}_{K,N}^{q}({X}_{s}^{i,K,N},X_{s}^{K,N})\,-\mathfrak{b}_{s}%
^{q}(X_{s}^{i})\right)  dW_{s}^{i}\right\vert \right\Vert _{p}\\
&  \leq C_{p}\left(  \mathrm{E}\left[  \left(  \int_{0}^{t}\left\vert \left(
\mathfrak{b}_{K,N}^{q}({X}_{s}^{i,K,N},X_{s}^{K,N})\,-\mathfrak{b}_{s}%
^{q}(X_{s}^{i})\right)  \right\vert ^{2}ds\right)  ^{p/2}\right]  \right)
^{1/p}.
\end{align*}
This implies that for $p\geq2,$%
\begin{align}
&  \mathrm{E}\sup_{s\in\lbrack0,t]}\left\vert \int_{0}^{s}\left(
\mathfrak{b}_{K,N}^{q}({X}_{s}^{i,K,N},X_{s}^{K,N})\,-\mathfrak{b}_{s}%
^{q}(X_{s}^{i})\right)  dW_{s}^{i}\right\vert ^{p}\label{fg}\\
&  \leq C_{p}^{p}\mathrm{E}\left[  \left(  \int_{0}^{t}\left\vert \left(
\mathfrak{b}_{K,N}^{q}({X}_{s}^{i,K,N},X_{s}^{K,N})\,-\mathfrak{b}_{s}%
^{q}(X_{s}^{i})\right)  \right\vert ^{2}ds\right)  ^{p/2}\right] \nonumber\\
&  \leq C_{p}^{p}t^{p/2-1}\mathrm{E}\left[  \int_{0}^{t}\left\vert \left(
\mathfrak{b}_{K,N}^{q}({X}_{s}^{i,K,N},X_{s}^{K,N})\,-\mathfrak{b}_{s}%
^{q}(X_{s}^{i})\right)  \right\vert ^{p}ds\right] \nonumber\\
&  \leq C_{p}^{p}t^{p/2-1}\mathrm{E}\left[  \int_{0}^{t}\left\vert \left(
\mathfrak{b}_{K,N}({X}_{s}^{i,K,N},X_{s}^{K,N})\,-\mathfrak{b}_{s}(X_{s}%
^{i})\right)  \right\vert ^{p}ds\right]  .\nonumber
\end{align}
Now, completely analogue to the derivation of (\ref{t12}), we get%
\begin{gather}
\frac{1}{N}\sum_{i=1}^{N}\int_{0}^{t}\mathrm{E}\left[  \left\vert
{\mathfrak{b}_{K,N}}({X}_{s}^{i,K,N},X_{s}^{K,N})\,-\mathfrak{b}_{s}(X_{s}%
^{i})\right\vert ^{p}\right]  ds\nonumber\\
\leq\left(  2^{2p-2}D_{\varphi}^{p}B_{\beta}^{p}+2^{2p-2}L_{\varphi}%
^{p}A_{\beta}^{p}F_{1}\right)  \int_{0}^{t}H(s)ds+2^{2p-2}L_{\varphi}%
^{p}A_{\beta}^{p}\frac{F_{2}}{N^{p/2+1/2}}t\nonumber\\
+2^{p-1}\frac{1}{N}\sum_{i=1}^{N}\int_{0}^{t}\mathrm{E}\left[  \left\vert
\mathfrak{b}{_{K,N}}({X}_{s}^{i},X_{s})\,-\mathfrak{b}_{s}(X_{s}%
^{i})\right\vert ^{p}\right]  ds. \label{t21}%
\end{gather}
Now by taking expectations on both sides of (\ref{huu}) and gathering all
together, we arrive at
\begin{gather}
H(t)\leq\left(  D_{\varphi}^{p}B_{\alpha}^{p}T^{p-1}+L_{\varphi}^{p}A_{\alpha
}^{p}F_{1}^{p}T^{p-1}\right. \nonumber\\
\left.  +C_{p}^{p}D_{\varphi}^{p}B_{\beta}^{p}d^{p}T^{p/2-1}+C_{p}%
^{p}L_{\varphi}^{p}A_{\beta}^{p}d^{p}T^{p/2-1}F_{1}^{p}\right)  2^{3p-3}%
\int_{0}^{t}H(s)ds\nonumber\\
+2^{3p-3}\left(  L_{\varphi}^{p}A_{\alpha}^{p}T^{p}+d^{p}C_{p}^{p}L_{\varphi
}^{p}A_{\beta}^{p}T^{p/2}\right)  \frac{F_{2}}{N^{p/2+1/2}}\label{pg}\\
+2^{2p-2}T^{p-1}\frac{1}{N}\sum_{i=1}^{N}\int_{0}^{t}\mathrm{E}\left[
\left\vert {\mathfrak{a}_{K,N}}({X}_{s}^{i},X_{s})\,-\mathfrak{a}_{s}%
(X_{s}^{i})\right\vert ^{p}\right]  ds\nonumber\\
+2^{2p-2}d^{p}C_{p}^{p}T^{p/2-1}\frac{1}{N}\sum_{i=1}^{N}\int_{0}%
^{t}\mathrm{E}\left[  \left\vert \mathfrak{b}{_{K,N}}({X}_{s}^{i}%
,X_{s})\,-\mathfrak{b}_{s}(X_{s}^{i})\right\vert ^{p}\right]  ds.\nonumber
\end{gather}
We next proceed with explicit estimates for the last two terms above. Let us
write
\[
\mathfrak{a}_{K,N}(X_{s}^{i},X_{s})-\mathfrak{a}_{s}(X_{s}^{i})=\sum_{k=1}%
^{K}\alpha_{k}(X_{s}^{i})\sum_{j=1}^{N}\frac{1}{N}\left(  \varphi_{k}%
(X_{s}^{j})-\gamma_{k}(s)\right)  -\sum_{k=K+1}^{\infty}\alpha_{k}(X_{s}%
^{i})\gamma_{k}(s),
\]
then we have by the Minkowski inequality,
\begin{align*}
\left\Vert \mathfrak{a}_{K,N}(X_{s}^{i},X_{s})-\mathfrak{a}_{s}(X_{s}%
^{i})\right\Vert _{p}  &  \leq\sum_{k=1}^{K}\left\Vert \alpha_{k}(X_{s}%
^{i})\frac{1}{N}\sum_{j=1}^{N}\xi_{k}^{j}\right\Vert _{p}\\
&  +\sum_{k=K+1}^{\infty}\left\Vert \alpha_{k}(X_{s}^{i})\gamma_{k}%
(s)\right\Vert _{p},
\end{align*}
where $\xi_{k}^{j}:=\varphi_{k}(X_{s}^{j})-\gamma_{k}(s),$ $j=1,\ldots,N,$
have mean zero. Let us now observe that
\begin{align*}
\mathrm{E}\left[  \left.  \left\vert \sum_{j=1}^{N}\xi_{k}^{j}\right\vert
^{p}\right\vert X^{i}\right]   &  =\mathrm{E}\left[  \left.  \left\vert
\xi_{k}^{i}+\sum_{j\neq i}^{N}\xi_{k}^{j}\right\vert ^{p}\right\vert
X^{i}\right] \\
&  \leq2^{p-1}\mathrm{E}\left[  \left.  \left\vert \xi_{k}^{i}\right\vert
^{p}+\left\vert \sum_{j\neq i}^{N}\xi_{k}^{j}\right\vert ^{p}\right\vert
X^{i}\right] \\
&  \leq2^{2p-1}D_{k,\varphi}^{p}+2^{p-1}\mathrm{E}\left[  \left\vert
\sum_{j\neq i}^{N}\xi_{k}^{j}\right\vert ^{p}\right]
\end{align*}
using (\ref{gam}). For $p\geq2,$ it follows from the Rosenthal's inequality
that,%
\[
\mathrm{E}\left[  \left\vert \sum_{j\neq i}^{N}\xi_{k}^{j}\right\vert
^{p}\right]  \leq C_{p}^{(1)}\left(  \left(  \sum_{j\neq i}^{N}\mathrm{E}%
\left\vert \xi_{k}^{j}\right\vert ^{2}\right)  ^{p/2}+\sum_{j\neq i}%
^{N}\mathrm{E}\left\vert \xi_{k}^{j}\right\vert ^{p}\right)
\]
for a constant $C_{p}^{(1)}$ only depending on $p,$ and, in fact, for $p=2$ we
have simply,%
\[
\mathrm{E}\left[  \left\vert \sum_{j\neq i}^{N}\xi_{k}^{j}\right\vert
^{p}\right]  =\sum_{j\neq i}^{N}\mathrm{E}\left\vert \xi_{k}^{j}\right\vert
^{2}.
\]
Thus, for $p\geq2,$%
\begin{align*}
\mathrm{E}\left[  \left.  \left\vert \frac{1}{N}\sum_{j=1}^{N}\xi_{k}%
^{j}\right\vert ^{p}\right\vert X_{s}^{i}\right]   &  \leq\frac{2^{2p-1}%
D_{k,\varphi}^{p}}{N^{p}}+\frac{2^{p-1}C_{p}^{(1)}}{N^{p}}\left(  \left(
\sum_{j\neq i}^{N}\mathrm{E}\left\vert \xi_{k}^{j}\right\vert ^{2}\right)
^{p/2}+\sum_{j\neq i}^{N}\mathrm{E}\left\vert \xi_{k}^{j}\right\vert
^{p}\right) \\
&  \leq\frac{2^{2p-1}D_{k,\varphi}^{p}}{N^{p}}+\frac{2^{2p-1}C_{p}%
^{(1)}D_{k,\varphi}^{p}}{N^{p/2}}+\frac{2^{2p-1}C_{p}^{(1)}D_{k,\varphi}^{p}%
}{N^{p-1}}\\
&  \leq\frac{\left(  C_{p}^{(2)}\right)  ^{p}D_{k,\varphi}^{p}}{N^{p/2}}\text{
\ \ for }N>N_{p}\text{ and some constants } C_{p}^{(2)}>0, N_{p}>0.
\end{align*}
So for any $p\geq2,$%
\begin{align*}
\left\Vert \alpha_{k}(X_{s}^{i})\frac{1}{N}\sum_{j=1}^{N}\xi_{k}%
^{j}\right\Vert _{p}^{p}  &  \leq A_{k,\alpha}^{p}\mathrm{E}\left[  \left(
1+\left\vert X_{s}^{i}\right\vert \right)  ^{p}\mathrm{E}\left[  \left.
\left\vert \frac{1}{N}\sum_{j=1}^{N}\xi_{k}^{j}\right\vert ^{p}\right\vert
X_{s}^{i}\right]  \right] \\
&  \leq A_{k,\alpha}^{p}D_{k,\varphi}^{p}\frac{\left(  C_{p}^{(2)}\right)
^{p}}{N^{p/2}}\mathrm{E}\left[  \left(  1+\left\vert X_{s}\right\vert \right)
^{p}\right]  ,
\end{align*}
hence%
\[
\left\Vert \alpha_{k}(X_{s}^{i})\frac{1}{N}\sum_{j=1}^{N}\xi_{k}%
^{j}\right\Vert _{p}\leq C_{p}^{(2)}A_{k,\alpha}D_{k,\varphi}F_{3}%
N^{-1/2}\text{ \ \ with \ \ }F_{3}:=\sup_{0\leq s\leq T}\left\Vert
1+\left\vert X_{s}\right\vert \right\Vert _{p},
\]
and further%
\[
\sum_{k=K+1}^{\infty}\left\Vert \alpha_{k}(X_{s}^{i})\gamma_{k}(s)\right\Vert
_{p}\leq F_{3}\sum_{k=K+1}^{\infty}A_{k,\alpha}\left\vert \gamma
_{k}(s)\right\vert .
\]
We thus obtain,%
\[
\left\Vert \mathfrak{a}_{K,N}(X_{s}^{i},X_{s})-\mathfrak{a}_{s}(X_{s}%
^{i})\right\Vert _{p}\leq C_{p}^{(2)}A_{\alpha}D_{\varphi}F_{3}N^{-1/2}%
+F_{3}\sum_{k=K+1}^{\infty}A_{k,\alpha}\left\vert \gamma_{k}(s)\right\vert ,
\]
that is,
\begin{align}
\mathrm{E}\left[  \left\vert \mathfrak{a}_{K,N}(X_{s}^{i},X_{s})-\mathfrak{a}%
_{s}(X_{s}^{i})\right\vert ^{p}\right]   &  \leq2^{p-1}\left(  C_{p}%
^{(2)}\right)  ^{p}A_{\alpha}^{p}D_{\varphi}^{p}F_{3}^{p}N^{-p/2}\nonumber\\
&  +2^{p-1}F_{3}^{p}\left(  \sum_{k=K+1}^{\infty}A_{k,\alpha}\left\vert
\gamma_{k}(s)\right\vert \right)  ^{p}. \label{in1}%
\end{align}
Analogously we get
\begin{align}
\mathrm{E}\left[  \left\vert \mathfrak{b}_{K,N}(X_{s}^{i},X_{s})-\mathfrak{b}%
_{s}(X_{s}^{i})\right\vert ^{p}\right]   &  \leq2^{p-1}\left(  C_{p}%
^{(2)}\right)  ^{p}A_{\beta}^{p}D_{\varphi}^{p}F_{3}^{p}N^{-p/2}\nonumber\\
&  +2^{p-1}F_{3}^{p}\left(  \sum_{k=K+1}^{\infty}A_{k,\beta}\left\vert
\gamma_{k}(s)\right\vert \right)  ^{p}. \label{in2}%
\end{align}
Now, combining the estimates (\ref{in1}) and (\ref{in2}) with (\ref{pg}),
yields for $0\leq t\leq T,$%

\begin{align*}
H(t)  &  \leq\left(  C_{p,\varphi,X}T^{p-1}+D_{p,\varphi,X}d^{p}%
T^{p/2-1}\right)  \int_{0}^{t}H(s)ds\\
&  +\left(  E_{p,\varphi,X}T^{p}+F_{p,\varphi,X}d^{p}T^{p/2}+O(N^{-1/2}%
)\right)  N^{-p/2}\\
&  +G_{p,\varphi,X}T^{p-1}\int_{0}^{T}\left(  \sum_{k=K+1}^{\infty}%
A_{k,\alpha}\left\vert \gamma_{k}(s)\right\vert \right)  ^{p}ds\\
&  +H_{p,\varphi,X}d^{p}T^{p/2-1}\int_{0}^{T}\left(  \sum_{k=K+1}^{\infty
}A_{k,\beta}\left\vert \gamma_{k}(s)\right\vert \right)  ^{p}ds
\end{align*}
with abbreviations%
\begin{align*}
C_{p,\varphi,X}  &  =2^{3p-3}D_{\varphi}^{p}B_{\alpha}^{p}+2^{3p-3}L_{\varphi
}^{p}A_{\alpha}^{p}F_{1}^{p}\\
D_{p,\varphi,X}  &  =2^{3p-3}C_{p}^{p}D_{\varphi}^{p}B_{\beta}^{p}%
+2^{3p-3}C_{p}^{p}L_{\varphi}^{p}A_{\beta}^{p}F_{1}^{p}\\
E_{p,\varphi,X}  &  =2^{3p-3}\left(  C_{p}^{(2)}\right)  ^{p}A_{\alpha}%
^{p}D_{\varphi}^{p}F_{3}^{p}\\
F_{p,\varphi,X}  &  =2^{3p-3}C_{p}^{p}\left(  C_{p}^{(2)}\right)  ^{p}%
A_{\beta}^{p}D_{\varphi}^{p}F_{3}^{p}\\
G_{p,\varphi,X}  &  =2^{3p-3}F_{3}^{p}\\
H_{p,\varphi,X}  &  =2^{3p-3}C_{p}^{p}F_{3}^{p}.
\end{align*}
Finally, the statement of the theorem follows from Gronwall's lemma by raising
the resulting inequality to the power $1/p,$ then using that $\left(
\sum_{i=1}^{q}|a_{i}|^{p}\right)  ^{1/p}\leq\sum_{i=1}^{q}|a_{i}|$ for
arbitrary $a_{i}\in\mathbb{R},$ $p,q\in\mathbb{N},$ a Minkowski type
inequality, and the observation that%
\[
\mathrm{E}\left[  \overline{\Delta_{T}^{p}}\right]  =\frac{1}{N}\sum_{i=1}%
^{N}\mathrm{E}\left[  \sup_{s\in\lbrack0,T]}\left\vert \Delta_{s}%
^{i}\right\vert ^{p}\right]  =\mathrm{E}\left[  \sup_{s\in\lbrack
0,T]}\left\vert \Delta_{s}^{\cdot}\right\vert ^{p}\right]  .
\]

\subsection{Proof of Theorem~\ref{thm:affine_expl}}

(i): Under the assumption (\ref{rder0}) the functions $H_{a^{j}}$ and $H_{b}$
are locally Lipschitz in $\mathbb{R\times R}_{\geq0}$ and, obviously, their
extensions $(p,q)\rightarrow H_{a^{j}}(p,|q|),$ $H_{b}(p,|q|)$ are locally
Lipschitz in $\mathbb{R\times R}.$ Thus, by standard ODE theory, there exists
a unique solution to the system%
\begin{align*}
G_{t}^{\prime}  &  =H_{b}^{2}\left(  A_{t},\left\vert G_{t}\right\vert
\right)  +2H_{a^{1}}\left(  A_{t},\left\vert G_{t}\right\vert \right)  G_{t}\\
A_{t}^{\prime}  &  =H_{a^{0}}\left(  A_{t},\left\vert G_{t}\right\vert
\right)  +H_{a^{1}}\left(  A_{t},\left\vert G_{t}\right\vert \right)
A_{t},\text{ \ \ }(A_{0},G_{0})=\left(  x_{0},0\right)  ,\text{ \ \ }0\leq
t<t_{\infty}\leq\infty,
\end{align*}
for some possibly finite explosion time $t_{\infty}.$ Then it can be
straightforwardly checked that this (unique)\ solution can be represented as%
\begin{align}
G_{t}  &  =\int_{0}^{t}H_{b}^{2}\left(  A_{s},\left\vert G_{s}\right\vert
\right)  e^{2\int_{s}^{t}H_{a^{1}}\left(  A_{r},\left\vert G_{r}\right\vert
\right)  dr}ds\label{loe}\\
A_{t}  &  =e^{\int_{0}^{t}H_{a^{1}}\left(  A_{s},\left\vert G_{s}\right\vert
\right)  ds}x_{0}+\int_{0}^{t}H_{a^{0}}\left(  A_{s},\left\vert G_{s}%
\right\vert \right)  e^{\int_{s}^{t}H_{a^{1}}\left(  A_{r},\left\vert
G_{r}\right\vert \right)  dr}ds,\text{\ }0\leq t<t_{\infty},\nonumber
\end{align}
whence in particular $G_{t}\geq0$ for $0\leq t<t_{\infty}.$ This proves (i).

(ii): By straightforward differentiating with respect to $t$, it follows that
(\ref{exp}) is a solution to (\ref{exp0}). Let us abbreviate in (\ref{exp})%
\[
\mathfrak{a}_{t}^{0}\equiv H_{a^{0}}\left(  A_{t},G_{t}\right)  ,\text{
\ \ }\mathfrak{a}_{t}^{1}\equiv H_{a^{1}}\left(  A_{t},G_{t}\right)  ,\text{
\ \ }\mathfrak{b}_{t}\equiv H_{b}\left(  A_{t},G_{t}\right)  ,\text{
\ \ }0\leq t<t_{\infty}.
\]
The characteristic function of $X_{t}$ in (\ref{exp}) then takes the form
\begin{equation}
\varphi_{t}(v)=\exp\left[  \mathfrak{i}v\int_{0}^{t}\mathfrak{a}_{s}%
^{0}e^{\int_{s}^{t}\mathfrak{a}_{r}^{1}dr}ds-\frac{1}{2}v^{2}\int_{0}%
^{t}\mathfrak{b}_{s}^{2}e^{2\int_{s}^{t}\mathfrak{a}_{r}^{1}dr}ds+\mathfrak{i}%
ve^{\int_{0}^{t}\mathfrak{a}_{s}^{1}ds}x_{0}\right]  . \label{cf1}%
\end{equation}
Since%
\[
\frac{e^{-\frac{(p-u)^{2}}{2q}}}{\sqrt{2\pi q}}=\frac{1}{2\pi}\int
e^{-\mathfrak{i}vu}\exp\left[  \mathfrak{i}vp-v^{2}q/2\right]  dv,
\]
we have for $j=0,1,$%
\[
H_{a^{j}}(p,q)=\frac{1}{2\pi}\int a^{j}(u)du\int\exp\left[  \mathfrak{i}%
vp-v^{2}q/2\right]  e^{-\mathfrak{i}vu}dv.
\]
It then follows that%
\begin{align}
&  H_{a^{j}}(e^{\int_{0}^{t}\mathfrak{a}_{s}^{1}ds}x_{0}+\int_{0}%
^{t}\mathfrak{a}_{s}^{0}e^{\int_{s}^{t}\mathfrak{a}_{r}^{1}dr}ds,\int_{0}%
^{t}\left(  \mathfrak{b}_{s}^{0}\right)  ^{2}e^{2\int_{s}^{t}\mathfrak{a}%
_{r}^{1}dr}ds)\nonumber\\
&  =\frac{1}{2\pi}\int a^{j}(u)du\int\varphi_{t}(v)e^{-\mathfrak{i}%
vu}dv\nonumber\\
&  =\int a^{j}(u)\mu_{t}(u)du=\mathrm{E}\left[  a^{j}(X_{t})\right]  ,\text{
\ \ }j=0,1, \label{Ha}%
\end{align}
with $\mu_{t}$ being the density of $X_{t},$ and similarly,%
\begin{equation}
H_{b}(e^{\int_{0}^{t}\mathfrak{a}_{s}^{1}ds}x_{0}+\int_{0}^{t}\mathfrak{a}%
_{s}^{0}e^{\int_{s}^{t}\mathfrak{a}_{r}^{1}dr}ds,\int_{0}^{t}\left(
\mathfrak{b}_{s}^{0}\right)  ^{2}e^{2\int_{s}^{t}\mathfrak{a}_{r}^{1}%
dr}ds)=\mathrm{E}\left[  b(X_{t})\right]  . \label{Hb}%
\end{equation}
On the other hand, in view of (\ref{loe}) and the fact that $G\geq0,$ one has%
\begin{align}
\int_{0}^{t}\left(  \mathfrak{b}_{s}^{0}\right)  ^{2}e^{2\int_{s}%
^{t}\mathfrak{a}_{r}^{1}dr}ds  &  =G_{t}\label{HH}\\
e^{\int_{0}^{t}\mathfrak{a}_{s}^{1}ds}x_{0}+\int_{0}^{t}\mathfrak{a}_{s}%
^{0}e^{\int_{s}^{t}\mathfrak{a}_{r}^{1}dr}ds  &  =A_{t},\nonumber
\end{align}
that is, by (\ref{Ha}), (\ref{Hb}), and (\ref{HH}), we obtain (\ref{MVl}) from
(\ref{exp0}).

\section{Appendix}

\label{app}

\subsection{Existence of moments}

\label{sec:moments}

\begin{thm}
\label{thm:moments} Fix some $p\geq2$ and suppose that $\mathrm{E}[|X_{0}%
|^{p}]<\infty.$ Then it holds under assumptions (AC) and (AF),
\[
\left\Vert \sup_{s\in\lbrack0,T]}\left\vert X_{s}^{\cdot,K,N}\right\vert
\right\Vert _{p}<\infty,
\]
uniformly in $K$ and $N.$
\end{thm}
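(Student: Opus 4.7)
The plan is to exploit the crucial observation that, although the coefficients of the coupled $N$-particle system in \eqref{eq:par_proj} depend on the empirical quantities $\gamma_k^N(s)$, their linear growth in $x$ is controlled by constants that are \emph{independent} of $K$ and $N$. From there a standard Burkholder--Davis--Gundy plus Gr\"onwall argument, combined with the symmetry identity \eqref{id}, yields the claim.

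First I would establish the deterministic pathwise bound
\[
\left|\sum_{k=0}^{K}\gamma_{k}^{N}(s)\,\alpha_{k}(x)\right|
\leq \sum_{k=0}^{K} D_{k,\varphi}\,A_{k,\alpha}\,(1+|x|)
\leq D_{\varphi}A_{\alpha}(1+|x|),
\]
and the analogous bound for the diffusion coefficient with $D_{\varphi}A_{\beta}$, by combining $|\varphi_k|\leq D_{k,\varphi}$ from (AF) with the summability condition in (AC). In particular the coefficients of the $i$-th SDE in \eqref{eq:par_proj} satisfy a linear growth bound with a constant depending only on $D_\varphi, A_\alpha, A_\beta$, and strong existence of the system is guaranteed since (AF) together with the Lipschitz part of (AC) also gives a (configuration-dependent but finite) Lipschitz constant for each fixed $K,N$.

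Next, for fixed $i$, I would apply the elementary $|a+b+c|^p \leq 3^{p-1}(|a|^p+|b|^p+|c|^p)$ inequality to the integral equation, take the supremum over $[0,t]$, and invoke H\"older on the drift integral and BDG on the stochastic integral (using $p\geq 2$) exactly as in the proof of Theorem~\ref{thmc}. This gives
\[
\mathrm{E}\!\left[\sup_{s\leq t}|X_s^{i,K,N}|^p\right]
\lesssim \mathrm{E}[|\xi^i|^p] + \int_0^t \mathrm{E}\!\left[(1+|X_s^{i,K,N}|)^p\right]ds,
\]
with a constant depending only on $p,T,D_\varphi,A_\alpha,A_\beta$. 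Using the symmetry \eqref{id}, the right-hand side is independent of $i$, and $\mathrm{E}[|X_s^{i,K,N}|^p]\leq \mathrm{E}[\sup_{r\leq s}|X_r^{i,K,N}|^p]$; writing $\phi(t):=\mathrm{E}[\sup_{s\leq t}|X_s^{\cdot,K,N}|^p]$ one obtains
\[
\phi(t) \;\lesssim\; 1 + \int_0^t \phi(s)\,ds,
\]
with constants free of $K,N$. Gr\"onwall's lemma then gives $\phi(T)<\infty$ uniformly in $K,N$, which is the claim.

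I do not expect a genuine obstacle here: the standard stopping-time localisation used to justify the Itô/BDG step for a merely locally bounded solution can be inserted routinely, because the linear growth bound then propagates the moment estimate past the localising times. The only point worth verifying carefully is that the ``hidden'' constant in the final Gr\"onwall estimate really depends on $(\varphi_k),(\alpha_k),(\beta_k)$ only through the global quantities $D_\varphi, A_\alpha, A_\beta$ and not through $K$, which is guaranteed by the $\ell_1$-type summability hypothesis in (AC).
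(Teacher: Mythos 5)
Your proposal is correct and follows essentially the same route as the paper: extract the uniform-in-$(K,N)$ linear-growth bounds $|\mathfrak{a}_{K,N}(x,\cdot)|\leq D_{\varphi}A_{\alpha}(1+|x|)$ and $|\mathfrak{b}_{K,N}(x,\cdot)|\leq D_{\varphi}A_{\beta}(1+|x|)$ from (AF) and (AC), then run a BDG\,+\,Gr\"onwall argument under a stopping-time localisation and remove the localisation at the end. The only stylistic difference is that the paper works directly with the $L_{p}$-norm $f_{R}(t)=\bigl\Vert\sup_{s\leq t}|X_{s\wedge\tau_{i,R}}^{i,K,N}|\bigr\Vert_{p}$ and invokes a tailored Gr\"onwall statement (Lemma~\ref{lem: gronwall}) to absorb the square-root term $\bigl(\int_{0}^{t}f_{R}^{2}\,ds\bigr)^{1/2}$ that BDG produces, whereas you raise to the $p$-th power and use H\"older to reduce $(\int_{0}^{t}|\cdot|^{2}\,ds)^{p/2}$ to $t^{p/2-1}\int_{0}^{t}|\cdot|^{p}\,ds$, bringing things back to the standard Gr\"onwall form; both are valid and give constants depending only on $p,T,d,D_{\varphi},A_{\alpha},A_{\beta},\Vert X_{0}\Vert_{p}$. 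One point you dismiss as ``routine'' does need a couple of lines in a complete write-up: after getting the bound for the stopped processes uniformly in $R$, one must argue (as the paper does via Fatou) that $\tau_{i,R}\uparrow\infty$ almost surely, so that letting $R\to\infty$ actually recovers $\sup_{s\leq T}|X_{s}^{i,K,N}|$; it is standard, but it is exactly the step where the uniform linear-growth bound is used to rule out explosion, so it should be stated rather than waved at.
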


\begin{proof}
Fix some $i\in\{1,\ldots,N\}$ and for every $R>0$ introduce the stopping time
\[
\tau_{i,R}=\inf\left\{  t\in\lbrack0,T]\,:\left\vert X_{t}^{i,K,N}-X_{0}%
^{i}\right\vert >R\right\}  .
\]
We obviously have
\[
\sup_{t\in\lbrack0,T]}\left\vert X_{t\wedge\tau_{i,R}}^{i,K,N}\right\vert \leq
R+\left\vert X_{0}^{i}\right\vert
\]
so that the non-decreasing function $f_{R}(t):=\left\Vert \sup_{s\in
\lbrack0,t]}\left\vert X_{s\wedge\tau_{i,R}}^{i,K,N}\right\vert \right\Vert
_{p},$ $t\in\lbrack0,T],$ is bounded by $R+\left\Vert X_{0}^{i}\right\Vert
_{p}.$ On the other hand
\begin{align*}
\sup_{s\in\lbrack0,t]}\left\vert X_{s\wedge\tau_{i,R}}^{i,K,N}\right\vert  &
\leq\left\vert X_{0}^{i}\right\vert +\int_{0}^{t\wedge\tau_{i,R}}\left\vert
\mathfrak{a}_{K,N}(X_{r}^{i,K,N},X_{r}^{K,N})\right\vert \,dr\\
&  +\sup_{s\in\lbrack0,t]}\left\vert \int_{0}^{s\wedge\tau_{i,R}}%
\mathfrak{b}_{K,N}(X_{r}^{i,K,N},X_{r}^{K,N})\,dW_{r}^{i}\right\vert \\
&  \leq\left\vert X_{0}^{i}\right\vert +\int_{0}^{t\wedge\tau_{i,R}}\left\vert
\mathfrak{a}_{K,N}(X_{r}^{i,K,N},X_{r}^{K,N})\right\vert \,dr\\
&  +\sum_{q=1}^{d}\sup_{s\in\lbrack0,t]}\left\vert \int_{0}^{s\wedge\tau
_{i,R}}\mathfrak{b}_{K,N}^{q}(X_{r}^{i,K,N},X_{r}^{K,N})\,dW_{r}%
^{i}\right\vert
\end{align*}
(cf. (\ref{defbq})). It then follows from the Minkowski and BDG inequality
that%
\begin{align*}
f_{R}(t)  &  \leq\left\Vert X_{0}\right\Vert _{p}+\int_{0}^{t}\left\Vert
1_{\{s\leq\tau_{i,R}\}}\mathfrak{a}_{K,N}(X_{s}^{i,K,N},X_{s}^{K,N}%
)\right\Vert _{p}\,ds\\
&  +dC_{p}^{BDG}\left\Vert \sqrt{\int_{0}^{t\wedge\tau_{i,R}}\left\vert
\mathfrak{b}_{K,N}(X_{s}^{i,K,N},X_{s}^{K,N})\right\vert ^{2}\,ds}\right\Vert
_{p}\\
&  \leq\left\Vert X_{0}\right\Vert _{p}+A_{\alpha}D_{\varphi}\int_{0}%
^{t}\left\Vert \left(  1+\left\vert X_{s\wedge\tau_{i,R}}^{i,K,N}\right\vert
\right)  \right\Vert _{p}\,ds\\
&  +A_{\beta}D_{\varphi}dC_{p}^{BDG}\left\Vert \sqrt{\int_{0}^{t}\left\vert
\left(  1+\left\vert X_{s\wedge\tau_{i,R}}^{i,K,N}\right\vert \right)
\right\vert ^{2}ds}\right\Vert _{p}\\
&  \leq\left\Vert X_{0}\right\Vert _{p}+A_{\alpha}D_{\varphi}\int_{0}%
^{t}\left(  1+\left\Vert \left\vert X_{s\wedge\tau_{i,R}}^{i,K,N}\right\vert
\right\Vert _{p}\right)  \,ds\\
&  +A_{\beta}D_{\varphi}dC_{p}^{BDG}\left(  \sqrt{t}+\left(  \int_{0}%
^{t}\left\Vert \left\vert X_{s\wedge\tau_{i,R}}^{i,K,N}\right\vert
^{2}\right\Vert _{p/2}\,ds\right)  ^{1/2}\right)
\end{align*}
again by the Minkowski inequality ($p\geq2$). Consequently, the function
$f_{R}$ satisfies
\[
f_{R}(t)\leq\left\Vert X_{0}\right\Vert _{p}+A_{\alpha}D_{\varphi}\int_{0}%
^{t}\left(  1+f_{R}(s)\right)  \,ds+A_{\beta}D_{\varphi}dC_{p}^{BDG}\left(
\sqrt{t}+\left(  \int_{0}^{t}f_{R}^{2}(s)\,ds\right)  ^{1/2}\right)  ,
\]
that is,%
\begin{align*}
f_{R}(t)  &  \leq\left\Vert X_{0}\right\Vert _{p}+A_{\alpha}D_{\varphi
}t+A_{\beta}D_{\varphi}dC_{p}^{BDG}\sqrt{t}\\
&  +A_{\alpha}D_{\varphi}\int_{0}^{t}f_{R}(s)\,ds+A_{\beta}D_{\varphi}%
dC_{p}^{BDG}\left(  \int_{0}^{t}f_{R}^{2}(s)\,ds\right)  ^{1/2}.
\end{align*}
By Lemma~\ref{lem: gronwall} (see Appendix) it follows that%
\begin{align}
\left\Vert \sup_{s\in\lbrack0,T]}\left\vert X_{s\wedge\tau_{i,R}}%
^{i,K,N}\right\vert \right\Vert _{p}  &  \leq2e^{\left(  2A_{\alpha}%
D_{\varphi}+A_{\beta}^{2}D_{\varphi}^{2}d^{2}\left(  C_{p}^{BDG}\right)
^{2}\right)  T}\times\label{GE}\\
&  \left(  \left\Vert X_{0}\right\Vert _{p}+A_{\alpha}D_{\varphi}T+A_{\beta
}D_{\varphi}dC_{p}^{BDG}\sqrt{T}\right)  .\nonumber
\end{align}

Now note that the stopping times $\tau_{i,R}$ are non-decreasing in $R,$ and
thus converges non-decreasingly to $\tau_{i,\infty}$ say, with $\tau
_{i,\infty}\in\lbrack0,T]\cup\{\infty\}.$ \bigskip Thus,%
\[
R\rightarrow\sup_{s\in\lbrack0,T]}\left\vert X_{s\wedge\tau_{i,R}}%
^{i,K,N}\right\vert
\]
is nondecreasing with%
\begin{equation}
\lim_{R\uparrow\infty}\sup_{s\in\lbrack0,T]}\left\vert X_{s\wedge\tau_{i,R}%
}^{i,K,N}\right\vert =\left\{
\begin{tabular}
[c]{c}%
$\sup_{s\in\lbrack0,T]}\left\vert X_{s}^{i,K,N}\right\vert $ \ \ on $\left\{
\tau_{i,\infty}=\infty\right\}  $\\
$\infty$ \ \ on $\left\{  \tau_{i,\infty}\leq T\right\}  $%
\end{tabular}
\ \ \right.  . \label{li}%
\end{equation}
Indeed, on the set $\{\tau_{i,\infty}\leq T\}$\ we have for any $R>0,$
$\left\vert X_{\tau_{i,R}}^{i,K,N}-X_{0}^{i}\right\vert \geq R$ with
$\tau_{i,R}\leq T,$ so that%
\[
\sup_{s\in\lbrack0,T]}\left\vert X_{s\wedge\tau_{i,R}}^{i,K,N}\right\vert
\geq\left\vert X_{\tau_{i,R}}^{i,K,N}\right\vert \geq\left\vert X_{\tau_{i,R}%
}^{i,K,N}\right\vert \geq R-\left\vert X_{0}^{i}\right\vert .
\]
The Fatou lemma (\ref{li}) implies (with $0:=\infty\cdot0$),
\begin{align*}
\left\Vert \lim_{R\uparrow\infty}1_{\left\{  \tau_{i,\infty}\leq T\right\}
}\sup_{s\in\lbrack0,T]}\left\vert X_{s\wedge\tau_{i,R}}^{i,K,N}\right\vert
\right\Vert _{p}  &  =\infty\cdot P\left(  \left\{  \tau_{i,\infty}\leq
T\right\}  \right) \\
&  \leq\liminf_{R}\left\Vert 1_{\left\{  \tau_{i,\infty}\leq T\right\}  }%
\sup_{s\in\lbrack0,T]}\left\vert X_{s\wedge\tau_{i,R}}^{i,K,N}\right\vert
\right\Vert _{p}\\
&  \leq\liminf_{R}\left\Vert \sup_{s\in\lbrack0,T]}\left\vert X_{s\wedge
\tau_{i,R}}^{i,K,N}\right\vert \right\Vert _{p}<\infty,
\end{align*}
because of (\ref{GE}). So $P\left(  \left\{  \tau_{i,\infty}\leq T\right\}
\right)  =0,$ i.e. $\tau_{i,\infty}=\infty$ almost surely. Again by the Fatou
lemma, (\ref{li}) then implies%
\[
\left\Vert \sup_{s\in\lbrack0,T]}\left\vert X_{s}^{i,K,N}\right\vert
\right\Vert _{p}\leq\liminf_{R}\left\Vert \sup_{s\in\lbrack0,T]}\left\vert
X_{s\wedge\tau_{i,R}}^{i,K,N}\right\vert \right\Vert _{p}<\infty,
\]
uniformly in $K$ and $N,$ because of (\ref{GE}) again.
\end{proof}

The following lemma is consequence of Gronwall's theorem.

\begin{lem}
\label{lem: gronwall} Let $f:$ $[0,T]\rightarrow\mathbb{R}_{+}$ and $\psi:$
$[0,T]\rightarrow\mathbb{R}_{+}$ be two non-negative non-decreasing functions
satisfying
\begin{equation}
f(t)\leq A\int_{0}^{t}f(s)\,ds+B\left(  \int_{0}^{t}f^{2}(s)\,ds\right)
^{1/2}+\psi(t),\quad t\in\lbrack0,T], \label{plug}%
\end{equation}
where $A,B$ are two positive real constants. Then
\[
f(t)\leq2e^{\left(  2A+B^{2}\right)  t}\,\psi(t),\quad t\in\lbrack0,T].
\]

\end{lem}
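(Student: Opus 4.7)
The plan is to reduce the inequality \eqref{plug} to a standard linear Gronwall inequality by eliminating the square-root term. The key ingredient is that $f$ is non-decreasing, which lets me compare $\int_0^t f^2$ with $f(t)\int_0^t f$.

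Concretely, since $f$ is non-decreasing, for every $s\in[0,t]$ we have $f(s)\le f(t)$, so
\[
\int_0^t f^2(s)\,ds \;\le\; f(t)\int_0^t f(s)\,ds.
\]
Applying the weighted AM--GM inequality $\sqrt{xy}\le \tfrac{\lambda}{2}x+\tfrac{1}{2\lambda}y$ with $x=f(t)$, $y=\int_0^t f(s)\,ds$ and $\lambda=1/B$, one obtains
\[
B\sqrt{\int_0^t f^2(s)\,ds}\;\le\;\tfrac{1}{2}f(t)+\tfrac{B^2}{2}\int_0^t f(s)\,ds.
\]
Plugging this back into \eqref{plug} absorbs a factor $\tfrac{1}{2}f(t)$ into the left-hand side, yielding
\[
f(t)\;\le\;(2A+B^2)\int_0^t f(s)\,ds+2\psi(t),\qquad t\in[0,T].
\]

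From here the classical (linear) Gronwall lemma gives
\[
f(t)\;\le\;2\psi(t)+\int_0^t 2\psi(s)(2A+B^2)e^{(2A+B^2)(t-s)}\,ds,
\]
and since $\psi$ is non-decreasing I can pull $\psi(s)\le\psi(t)$ out of the integral and evaluate to get $f(t)\le 2e^{(2A+B^2)t}\psi(t)$, which is the claimed bound.

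The only subtle step is the AM--GM/absorption trick above: without exploiting the monotonicity of $f$ one would be stuck with the nonlinear $L^2$ term. The choice $\lambda=1/B$ is made precisely so that the coefficient in front of $f(t)$ on the right becomes $1/2$; any strictly smaller coefficient would work equally well and only change the multiplicative constants. Once the square-root has been removed in this way, everything else is routine Gronwall plus monotonicity of $\psi$.
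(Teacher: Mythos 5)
Your proof is correct and follows essentially the same route as the paper's: bound $\bigl(\int_0^t f^2\bigr)^{1/2}$ by $\bigl(f(t)\int_0^t f\bigr)^{1/2}$ using monotonicity of $f$, split with the weighted AM--GM inequality $\sqrt{xy}\le\tfrac12(x/B+By)$, absorb the resulting $\tfrac12 f(t)$ into the left-hand side to reach $f(t)\le(2A+B^2)\int_0^t f+2\psi(t)$, and finish with the linear Gronwall lemma. The only difference is presentational: you make the use of monotonicity explicit and write out the Gronwall integral and its evaluation, whereas the paper states the chained inequality directly and invokes ``standard Gronwall'' without the final computation.
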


\begin{proof}
It follows from the elementary inequality $\sqrt{xy}\leq\frac{1}{2}\left(
x/B+By\right)  ,$ $x,y\geq0,$$B>0,$ that
\[
\left(  \int_{0}^{t}f^{2}(s)\,ds\right)  ^{1/2}\leq\left(  f(t)\int_{0}%
^{t}f(s)\,ds\right)  ^{1/2}\leq\frac{f(t)}{2B}+\frac{B}{2}\int_{0}%
^{t}f(s)\,ds.
\]
Plugging this into (\ref{plug}) yields
\[
f(t)\leq(2A+B^{2})\int_{0}^{t}f(s)\,ds+2\psi(t).
\]
Now the standard Gronwall inequality yields the desired result.
\end{proof}

\bibliographystyle{plain}
\bibliography{particles-1}

\end{document}